\def\O{\mathrm O}
\def\R{\mathbb R}
\def\C{\mathbb C}
\def\F{\mathbb F}
\def\H{\mathbb H}
\def\V{\mathbb V}
\def\U {\mathrm{U}}
\def\N{\mathbb N}
\def\R{\mathbb R}
\newcommand{\SU}{\mathrm{SU}}
\newcommand{\SO}{\mathrm{SO}}
\newcommand{\GL}{\mathrm{GL}}
\newcommand{\Sp}{\mathrm{Sp}}
\newtheorem{theorem}{Theorem}[section]
\newtheorem{corollary}[theorem]{Corollary}
\newtheorem{lemma}[theorem]{Lemma}
\theoremstyle{definition}
\newtheorem{definition}[theorem]{Definition}
\newtheorem{example}[theorem]{Example}
\newtheorem*{ack}{Acknowledgement}
\theoremstyle{remark}
\numberwithin{equation}{section}
\newcommand{\diagentry}[1]{\mathmakebox[1.5 em]{#1}}
\newcommand{\secref}[1]{Section~\ref{#1}}
\newcommand{\thmref}[1]{Theorem~\ref{#1}}
\newcommand{\lemref}[1]{Lemma~\ref{#1}}
\newcommand{\corref}[1]{Corollary~\ref{#1}}
\begin{document}

\title[Reversibility of Hermitian Isometries]{Reversibility of Hermitian Isometries}
\author[Krishnendu Gongopadhyay  \and Tejbir Lohan]{Krishnendu Gongopadhyay \and
 Tejbir Lohan}
\address{Indian Institute of Science Education and Research (IISER) Mohali,
 Knowledge City,  Sector 81, S.A.S. Nagar 140306, Punjab, India}
\email{krishnendug@gmail.com, krishnendu@iisermohali.ac.in}
\address{Indian Institute of Science Education and Research (IISER) Mohali,
 Knowledge City,  Sector 81, S.A.S. Nagar 140306, Punjab, India}
\email{tejbirlohan70@gmail.com}
 \subjclass[2010]{Primary 20E45; Secondary 15B33, 15B57 }
\keywords{ Reversible elements, real elements, strongly reversible elements, strongly real elements, Hermitian space, unitary groups, affine isometries. }
 
\date{\today}

\begin{abstract}
An element $g$ in a group $G$ is called reversible (or real) if it is conjugate to $g^{-1}$ in $G$, i.e., there exists $h$ in $G$ such that $g^{-1}=hgh^{-1}$. The element $g$  is called \emph{strongly reversible} if the conjugating element $h$ is an involution (i.e., element of order at most two) in $G$. In this paper, we classify reversible and strongly reversible elements in the isometry groups of $\F $-Hermitian spaces, where $\F=\C$ or $\H$. More precisely, we classify reversible and strongly reversible elements in the groups $\Sp(n) \ltimes \H^n$, $\U(n) \ltimes \C^n$ and $\SU(n) \ltimes \C^n$. We also give a new proof of the classification of strongly reversible elements in $\Sp(n)$. 
\end{abstract} 

\maketitle 

\section{ Introduction} \label{intro}
Let $G$ be a group. An element $g$ in $G$ is called \emph{reversible} if it is conjugate to $g^{-1}$ in $G$,  i.e., there exists $h$ in $G$ such that $g^{-1}=hgh^{-1}$. A reversible element $g$ is called \emph{strongly reversible} if  it is conjugate to $g^{-1}$  by an involution (i.e., element of order at most two) in $G$. Equivalently, $g$ is strongly reversible if it is a product of two involutions in $G$. 

 Reversible elements are also known as real elements in the literature, and strongly reversible elements are known as `strongly real' and `bireflectional',  e.g., \cite{ FZ}, \cite{KN1}, \cite{ST}, \cite{W}. A strongly reversible element is reversible,  but the converse is not true in general. It has been a problem of broad interest to investigate reversibility in groups; see the monograph \cite{FS} for an elaborate exposition of this theme. 

In \cite{S}, also see \cite[Chapter 6]{FS}, Short proved that every element in the Euclidean isometry group $\O(n) \ltimes \R^n$ is strongly reversible. Situation for the orientation-preserving isometries $\SO(n) \ltimes \R^n$ is subtle, and Short classified the strongly reversible elements in this group as well. In this paper, we aim to investigate the reversibility of elements in the isometry group of the $\F$-Hermitian space, where $\F$ is either the field of complex numbers $\C$ or the division ring of Hamilton's quaternions $\H$. Before stating the main results, we review the background material. 

We recall that every element in $\H$ can be expressed as $a=a_0 + a_1 i + a_2 j + a_3 k$, where $a_0$, $a_1$, $a_2$, $a_3$ are real numbers, and 
 $i^2=j^2=k^2=-1$,  $ij=-ji=k$,  $ jk=-kj=i$, $ki=-ik=j$. The conjugate of $a$ is given by $\bar a=a_0-a_1i -a_2j -a_3k$.   We identify the real subspace $\R \oplus \R i$ with the usual complex plane $\C$, and then one can write $\H= \C \oplus  \C j$. 

\medskip 
Let $\V$ be an $n$-dimensional right vector space over $\H$.  Let $T: \V \to \V$ be  a right linear transformation.  After choosing a suitable basis of  $\V$, we can represent $T$ by an $n \times n$ matrix over $\H$. Let $v \in \V , v \neq 0 $, $\lambda \in \H$, be such that $T(v)=v \lambda$, then for $\mu \in {\H}^{\times}$, we have 
$$T(v \mu)=(v \mu) \mu^{-1} \lambda \mu.$$Therefore, eigenvalues of $T$ occur in similarity classes and if $v$ is a $\lambda$-eigenvector, then $v \mu \in v \H$ is a $\mu^{-1} \lambda \mu$-eigenvector. Each similarity class of eigenvalues contains a unique pair of complex numbers that are complex conjugates. We often refer to them as `eigenvalues', though it should be understood that our reference is towards their similarity classes. In places where we need to distinguish between the similarity class and a representative, we shall write the similarity class of an eigenvalue representative $\lambda$ by $[\lambda]$. We shall mostly choose the complex representative of a similarity class where the argument lies in $[0, \pi]$. For an elaborate discussion on the theory of linear transformations over the quaternions, see \cite{rodman}.  

\medskip 

Let $\V=\F^{n}$ be equipped with the $\F$-Hermitian form 
\[\Phi(z,w)=\bar{z}_1w_1+\dots+\bar{z}_nw_n,\]
where $z=( z_1, \dots, z_n), \; w=( w_1, \dots, w_n)\in \F^{n}$. The group of linear transformations $g$ that preserves this form, i.e.,  $\Phi(gz, gw)=\Phi(z, w)$ for all $z, w \in \V$,  is the unitary group $\U(n, \F)$. 

In matrix notation, 
\[\U(n, \F) \coloneqq \{g \in \GL(n,\F) \mid \bar{g}^\top g =g \bar{g}^\top= I_n\},\] where $\GL(n,\F)$ is the group of invertible $n \times n$ matrices over $\F$ and $I_n$ is the identity matrix of order $n$. Note that $ \U(n, \F)$ is a compact subgroup of $\GL(n,\F)$. Following usual notation, we write $ \Sp(n)  \coloneqq  \U(n, \H)$, $ \U(n) \coloneqq  \U(n, \C) $, and $\SU(n) \coloneqq \{ g \in \U(n, \C): \det(g)=1\}$. Note that all eigenvalues of $\Sp(n)$ and $ \U(n)$ have unit modulus.
 We denote $g^{\bigstar}=\bar{g}^\top$ in the sequel. 
We will use the following definitions.
\begin{definition}\label{multiplicity}
Let $g\in \Sp(n)$ such that $e^{i\theta_l}$, $1 \leqslant l \leqslant m$, denote the distinct eigenvalues of $g$. The right vector space $\H^n$ has the following orthogonal decomposition into eigenspaces:
	\[ \H^n=\V_{\theta_1}\oplus \V_{\theta_2}\oplus \dots \oplus \V_{\theta_m},\]
	where $\V_{\theta_l}=\{v\in \H^n\mid gv=ve^{i\theta_l}\}$ for $1\leq l\leq m$.
	We define the {\it multiplicity} of $e^{i\theta_l}$ to be $\mathrm{dim}\;(\V_{\theta_l})$. Equivalently, it is the number of repetitions of the eigenvalue $e^{i\theta_l}$ in the diagonalization of $g$. 
\end{definition}

\begin{definition}
Let $g \in \U(n)$. The characteristic polynomial $\chi_g(x)$ of $g$ is called \emph{self-dual} if whenever $\lambda \neq \pm 1$ is a root of $\chi_g(x)$, so is $\lambda^{-1}$ with the same multiplicity.
\end{definition}
\medskip The Hermitian form $\Phi$ gives a natural metric $d(z, w)=\Phi(z-w, z-w)^{\frac{1}{2}}$ on $\V$. We call $(\V, d)$, a \emph{Hermitian space}.   The group $\U(n, \F) \ltimes \F^n$ acts isometrically on $(\V, d)$ as affine transformations: $T: z \mapsto Az+v$, where $A \in \U(n, \F)$ and $v \in \F^n$.  This action identifies the isometry group $\textnormal{Isom}(\V,d)$ with $\U(n, \F) \ltimes \F^n$.

\medskip  In this paper, we classify the reversible and strongly reversible elements in the group
 $\U(n, \F) \ltimes \F^n$. The reversibility depends upon the underlying $\F$ and the results in the respective groups are very different. We begin with the group $\Sp(n) \ltimes \H^n$. It is proved that every element in $\Sp(n) \ltimes \H^n$ is reversible. 
\begin{theorem}\label{revsp}
Let $g$ be an element of $ \Sp(n)  \ltimes \H^n$. Then $g$ is reversible in $ \Sp(n)  \ltimes \H^n$. \end{theorem}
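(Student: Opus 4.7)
The plan is to write $g=(A,v)$ and produce a conjugator $(B,w)\in\Sp(n)\ltimes\H^n$ satisfying $(B,w)(A,v)(B,w)^{-1}=g^{-1}=(A^{-1},-A^{-1}v)$. Expanding via the semidirect product law $(A_1,v_1)(A_2,v_2)=(A_1A_2, A_1v_2+v_1)$, this amounts to the two conditions
\[
BAB^{-1}=A^{-1}\quad\text{and}\quad (A^{-1}-I)\,w = Bv+A^{-1}v,
\]
separating the problem into a conjugacy question for $A$ inside $\Sp(n)$ together with a compatibility equation for the translation part.

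I first reduce $v$ to a standard form. A direct computation gives $(I,u)(A,v)(I,-u)=(A,v+(I-A)u)$. Since $A\in\Sp(n)$ is diagonalizable, the $A$-invariant orthogonal decomposition $\H^n=\V_1\oplus\V_1^\perp$, where $\V_1$ is the $1$-eigenspace of $A$, makes $I-A$ vanish on $\V_1$ and be invertible on $\V_1^\perp$. Choosing $u\in\V_1^\perp$ appropriately, I may cancel the $\V_1^\perp$-component of $v$ and so assume $v\in\V_1$; then $A^{-1}v=v$ and the target becomes $(A^{-1},-v)$.

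I now search for a conjugator of the form $(B,0)$. With $w=0$ and $v\in\V_1$, the translation equation reduces to $0=Bv+v$, so I need $B$ satisfying both $BAB^{-1}=A^{-1}$ and $Bv=-v$. I construct $B$ block-diagonally relative to $\H^n=\V_1\oplus\V_1^\perp$. On $\V_1^\perp$, diagonalize $A|_{\V_1^\perp}=UDU^\bigstar$ with $D=\mathrm{diag}(e^{i\theta_1},\ldots,e^{i\theta_{n-k}})$, $\theta_l\in(0,\pi]$; the quaternionic identity $j\,e^{i\theta}\,j^{-1}=e^{-i\theta}$ gives $JDJ^{-1}=D^{-1}$ for $J=\mathrm{diag}(j,\ldots,j)$, so $B_0:=UJU^\bigstar\in\Sp(\V_1^\perp)$ conjugates $A|_{\V_1^\perp}$ to its inverse. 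On $\V_1$, where $A$ restricts to the identity and the conjugation constraint $BAB^{-1}=A^{-1}$ is vacuous, I simply take $B_1=-I_{\V_1}$; this immediately gives $Bv=-v$. Setting $B=B_0\oplus B_1\in\Sp(n)$ and verifying $(B,0)(A,v)(B,0)^{-1}=(A^{-1},-v)$ concludes the argument.

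The main obstacle is the behaviour on the $1$-eigenspace: a conjugator built naively from the global diagonalization of $A$ gives no control over $Bv$ on $\V_1$, so the translation equation becomes unsolvable there. Shifting $v$ into $\V_1$ by a preliminary translation, and then exploiting the freedom of $B|_{\V_1}$ (where the conjugation constraint is trivial) to force $Bv=-v$, is the device that resolves this obstruction.
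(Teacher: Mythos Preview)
Your proof is correct and follows essentially the same strategy as the paper: reduce (by a translation conjugation) to the case $v\in\V_1$, then build a block-diagonal $B\in\Sp(n)$ that uses the identity $j\,e^{i\theta}\,j^{-1}=e^{-i\theta}$ on the non-identity eigenspaces and acts as $-I$ on $\V_1$ to force $Bv=-v$. The only cosmetic differences are that the paper first fully diagonalizes $A$ (via its \lemref{l2.1}) and uses $-I$ rather than $j$ on the $(-1)$-eigenspace, and it treats the case $\V_1=\{0\}$ separately by citing reversibility in $\Sp(n)$, whereas your construction handles all cases uniformly.
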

However, not every element is strongly reversible in this group and the following theorem classifies strongly reversible elements in $\Sp(n) \ltimes \H^n$.

 \begin{theorem}\label{srasp} Let $g = (A,w )$ be an element of $\Sp(n)  \ltimes \H^n$. Then the following are equivalent. 
\begin{enumerate}
\item $g$ is strongly reversible in $\Sp(n) \ltimes \H^n$. 
\item Every eigenvalue class of $A$ is either $ \pm 1$ or of even multiplicity. 
\item $A$ is strongly reversible in $\Sp(n)$.\end{enumerate} 
\end{theorem}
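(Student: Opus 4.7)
My plan is to prove (1) $\Rightarrow$ (3) directly, quote the equivalence (2) $\Leftrightarrow$ (3) for $\Sp(n)$ as a separate statement, and then prove the main direction (3) $\Rightarrow$ (1). For (1) $\Rightarrow$ (3): since $(B, u)$ is an involution we have $(B, u)^{-1} = (B, u)$, and the reversing equation becomes $(B, u)(A, w)(B, u) = (A, w)^{-1} = (A^{-1}, -A^{-1} w)$. Expanding the left side using the semidirect-product multiplication $(A_1, w_1)(A_2, w_2) = (A_1 A_2, A_1 w_2 + w_1)$ gives $(BAB,\ BAu + Bw + u)$, so comparing linear parts yields $BAB = A^{-1}$; combined with $B^2 = I$ this shows $B$ is a strong reversing involution for $A$ in $\Sp(n)$. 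The equivalence (2) $\Leftrightarrow$ (3), namely the classification of strongly reversible elements of $\Sp(n)$, is handled separately in the paper, and I use it as a black box.

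For (3) $\Rightarrow$ (1), let $\V_1 = \ker(A - I)$. \textbf{Step 1: reduce to $w \in \V_1$.} Since $A$ has no $1$-eigenvalue on $\V_1^\perp$, the operator $A - I$ restricts to an automorphism of $\V_1^\perp$. Writing $w = w_1 + w_2$ with $w_1 \in \V_1$ and $w_2 \in \V_1^\perp$, choose $v \in \V_1^\perp$ with $(A - I)v = w_2$. A direct calculation gives
\[
(I, v)(A, w)(I, v)^{-1} = (A, w - (A - I)v) = (A, w_1).
\]
Since strong reversibility is conjugation-invariant, it suffices to treat $(A, w_1)$. \textbf{Step 2: construct the involution.} Let $B \in \Sp(n)$ be a strong reversing involution for $A$, so $BAB = A^{-1}$. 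From $AB = BA^{-1}$ one checks that $B$ preserves $\V_1$ (if $Av = v$ then $ABv = BA^{-1}v = Bv$) and hence also $\V_1^\perp$. Define $B' = -I_{\V_1} \oplus B|_{\V_1^\perp} \in \Sp(n)$; this is again an involution and still satisfies $B' A B' = A^{-1}$, because the relation is trivial on $\V_1$ where $A = A^{-1} = I$. Then $(B', 0)$ is an involution in $\Sp(n) \ltimes \H^n$, and
\[
(B', 0)(A, w_1)(B', 0) = (B' A B',\ B' w_1) = (A^{-1}, -w_1),
\]
since $B'|_{\V_1} = -I$ gives $B' w_1 = -w_1$. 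The right-hand side equals $(A, w_1)^{-1} = (A^{-1}, -A^{-1} w_1) = (A^{-1}, -w_1)$, so $(A, w_1)$ is strongly reversible.

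The main subtlety I anticipate is the modification of $B$ on $\V_1$ in Step 2. Taking the original $B$ together with $u = 0$ would force $B w_1 = -w_1$, which is not guaranteed on $\V_1$. Replacing $B|_{\V_1}$ by $-I_{\V_1}$ cures this without breaking the strong-reversing relation, precisely because $A$ acts trivially on $\V_1$ and hence any involution there conjugates $A|_{\V_1} = I$ to $A^{-1}|_{\V_1} = I$. This also explains, a posteriori, why the $\pm 1$ eigenvalue classes of $A$ play a distinguished role in the criterion (2).
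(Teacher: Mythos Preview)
Your proof is correct and follows the same overall strategy as the paper: project to the linear part for $(1)\Rightarrow(3)$, quote the $\Sp(n)$ classification for $(2)\Leftrightarrow(3)$, and for the remaining implication first conjugate so that the translation part lies in $\V_1=\ker(A-I)$, then exhibit an involution $B'\in\Sp(n)$ with $B'AB'^{-1}=A^{-1}$ and $B'v=-v$. The only substantive difference is in how $B'$ is produced. The paper works from hypothesis~(2): it diagonalizes $A$ and writes down $B'$ explicitly as a block matrix, using $L=\begin{pmatrix}0&j\\-j&0\end{pmatrix}$ on each non-real eigenvalue block and $-I_t$ on the $1$-eigenspace. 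You instead work from hypothesis~(3): take any strong reverser $B$ of $A$, observe that $B$ preserves $\V_1$, and replace $B|_{\V_1}$ by $-I_{\V_1}$. Your argument is basis-free and isolates exactly why the $1$-eigenspace is the only place needing adjustment; the paper's explicit construction has the minor advantage of proving $(2)\Rightarrow(1)$ directly without passing through the existence of a reverser for $A$.
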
 

\medskip In the group $\U(n) \ltimes \C^n$,  not every element is reversible. The reversibility depends on the self-duality of the linear part and we prove the following.  
\begin{theorem}\label{srau}
Let $g=(A, w)$ be an element of $\U(n) \ltimes \C^n$. Then the following are equivalent. 
\begin{enumerate} 
\item $g$ is reversible in $\U(n) \ltimes \C^n$. 
\item $g$ is strongly reversible in $\U(n) \ltimes \C^n$. 
\item $A$ is strongly reversible in $\U(n)$. 
\item The characteristic polynomial of $A$ is self-dual.  
 \end{enumerate} 
\end{theorem}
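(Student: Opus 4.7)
The plan is to organise the fourfold equivalence as a cycle $(2)\Rightarrow(1)\Rightarrow(4)\Rightarrow(3)\Rightarrow(2)$. The implication $(2)\Rightarrow(1)$ is immediate from the definition. For $(1)\Rightarrow(4)$, I begin by computing in $\U(n)\ltimes\C^n$ that if $h=(B,v)$ satisfies $hgh^{-1}=g^{-1}$ for $g=(A,w)$, then
\begin{equation}\label{eq:conjaff-plan}
hgh^{-1}=\bigl(BAB^{-1},\; (I-BAB^{-1})v+Bw\bigr),\qquad g^{-1}=(A^{-1},-A^{-1}w),
\end{equation}
so comparison of linear and translation parts yields the two conditions $BAB^{-1}=A^{-1}$ and $(I-A^{-1})v=-(B+A^{-1})w$. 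The first forces $A$ to be reversible in $\U(n)$; since every element of $\U(n)$ is diagonalisable with unimodular spectrum, this is equivalent to $\chi_A(x)$ being self-dual.

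The equivalence $(3)\Leftrightarrow(4)$ is the standard assertion that in the compact group $\U(n)$ strong reversibility is detected by self-duality of $\chi_A$. I include a brief proof: from self-duality, decompose $\C^n$ orthogonally as $V_1\oplus V_{-1}\oplus\bigoplus_{\{\lambda,\lambda^{-1}\}}(V_\lambda\oplus V_{\lambda^{-1}})$, the last sum taken over pairs with $\lambda\neq\pm 1$ and $\dim V_\lambda=\dim V_{\lambda^{-1}}$; choosing matched orthonormal bases of the paired eigenspaces, let the reversing involution swap $V_\lambda$ with $V_{\lambda^{-1}}$ on each paired block while acting as any unitary involution on $V_{\pm 1}$. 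The converse direction is clear.

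The principal step is $(3)\Rightarrow(2)$. Given that $A$ is strongly reversible in $\U(n)$, I must exhibit a pair $(B,v)$ with $B\in\U(n)$ a unitary involution satisfying $BAB^{-1}=A^{-1}$, such that $(B+I)v=0$ (equivalently $(B,v)^2=(I,0)$) and $(I-A^{-1})v=-(B+A^{-1})w$. I construct $(B,v)$ block by block on the decomposition above, crucially exploiting the freedom to choose $B$ on $V_{\pm 1}$. On a paired block $V_\lambda\oplus V_{\lambda^{-1}}$ with $\lambda\neq\pm 1$, taking $B$ to be the swap, the equation has the unique solution $v=(a,-a)$ with $a=(w_1+\lambda w_2)/(1-\lambda)$, which automatically lies in the $(-1)$-eigenspace of $B$. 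On $V_{-1}$, the equation reads $2v|_{V_{-1}}=(I-B|_{V_{-1}})w|_{V_{-1}}$ and produces a valid $v|_{V_{-1}}$ for any involution $B|_{V_{-1}}$; the simplest choice $B|_{V_{-1}}=I$ yields $v|_{V_{-1}}=0$. On $V_1$, the operator $I-A^{-1}$ vanishes and the equation collapses to the compatibility condition $(I+B)w|_{V_1}=0$ on $B$ rather than an equation for $v$. I resolve this by choosing $B|_{V_1}=-I$, making $w|_{V_1}$ automatically a $(-1)$-eigenvector and allowing $v|_{V_1}=0$. The direct sum of these block data produces the required strongly reversing involution $(B,v)$ for $g$.

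The principal obstacle is precisely the $V_1$ compatibility: on the $+1$-eigenspace of $A$ the translation equation degenerates to a constraint on the reversing involution, not on $v$. The argument succeeds because self-duality of $\chi_A$ leaves the reversing involution entirely free on $V_{\pm 1}$, which gives just enough slack to absorb arbitrary $w$. Once this coordinated choice of $B$ is in place, all defining relations are verified block by block, closing the cycle.
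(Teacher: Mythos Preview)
Your argument is correct and rests on the same underlying idea as the paper's---construct a block-diagonal reversing involution that swaps paired eigenspaces and, crucially, acts as $-I$ on the $+1$-eigenspace of $A$---but the implementation differs. The paper first invokes a normalisation lemma (\lemref{l2.1}) to conjugate $g$ into the form $(A,v)$ with $A$ diagonal and $Av=v$, so that $v$ is supported entirely on $V_1$; after this the reversing element can be taken with \emph{zero} translation part, $h=(B,\mathbf{0})$, and the only thing to check is $Bv=-v$. You instead keep the original $g=(A,w)$ and solve the affine equation $(I-A^{-1})v=-(B+A^{-1})w$ block by block for the translation part of $h$. Your route avoids the preliminary conjugation at the cost of a short linear-algebra computation on each block; the paper's route front-loads that work into the normal-form lemma and then has a one-line verification. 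Both arrive at the same reversing involution $B=\bigoplus K \oplus I_s \oplus (-I_t)$, and both hinge on the observation that the degeneracy of $I-A^{-1}$ on $V_1$ is absorbed by the free choice $B|_{V_1}=-I$.
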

We also have the following classification of reversible elements in $\SU(n) \ltimes \C^n$. 
\begin{theorem}\label{rsu} 
Let $g =(A,w )$ be an element of  $\SU(n) \ltimes \C^n$, where $A \neq I_n$. Then the following are equivalent.

\begin{enumerate}
\item $g$ is reversible in $\SU(n) \ltimes \C^n$. 
\item $A$ is reversible in $\SU(n)$.  
\item The characteristic polynomial of $A$ is self-dual. 
\end{enumerate}

 When $A=I_n$ and $n \geq 2$, the element $g=(I_n, w)$ is strongly reversible. 
\end{theorem}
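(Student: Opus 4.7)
The plan is to prove the implications $(1)\Rightarrow(2)\Rightarrow(3)\Rightarrow(1)$ in a cycle and then handle $A=I_n$ separately. The implication $(1)\Rightarrow(2)$ follows from comparing linear parts of the semidirect-product reversibility relation
\[ (B,u)(A,w)(B,u)^{-1} = (A^{-1}, -A^{-1}w). \]
For $(2)\Leftrightarrow(3)$, I would appeal to \thmref{srau} applied to $A\in\U(n)$: self-duality of the characteristic polynomial is equivalent to reversibility of $A$ in $\U(n)$. If $B\in\U(n)$ reverses $A$, then so does $\lambda B$ for any unit scalar $\lambda$; choosing $\lambda$ with $\lambda^n=\det(B)^{-1}$ produces a reversing element in $\SU(n)$, and the reverse implication is trivial.

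The substance of the theorem lies in $(3)\Rightarrow(1)$. Expanding the reversibility relation coordinate-wise reduces it to the simultaneous system
\[ BAB^{-1}=A^{-1}, \qquad (I-A^{-1})u=-(B+A^{-1})w. \]
Because $A$ is unitary, $I-A^{-1}$ is normal, so its image equals $E_1^{\perp}$, where $E_1=\ker(A-I)$ is the $1$-eigenspace of $A$. A direct check using the first equation shows that $B$ permutes the eigenspaces of $A$ by $\lambda\mapsto\lambda^{-1}$, and in particular preserves each of $E_1$ and $E_1^{\perp}$. A short inner-product calculation then reduces solvability of the affine equation in $u$ to the single linear condition $Bw_1=-w_1$, where $w_1=P_{E_1}(w)$ is the orthogonal projection of $w$ onto $E_1$.

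Starting from any $B_0\in\SU(n)$ reversing $A$ (guaranteed by (2)), every reversing element has the form $CB_0$ with $C\in Z_{\U(n)}(A)=\prod_{\lambda}\U(E_\lambda)$, and the condition $B\in\SU(n)$ becomes $\det C=1$. Setting $y=B_0|_{E_1}(w_1)$ we have $\|y\|=\|w_1\|$, so there exists a unitary $C|_{E_1}\in\U(E_1)$ sending $y$ to $-w_1$. The hypothesis $A\neq I_n$ supplies at least one eigenspace $E_\lambda$ with $\lambda\neq 1$, on which $C|_{E_\lambda}$ can be adjusted to arrange $\det C=1$. With $B=CB_0$ and the corresponding $u$ obtained from the affine equation, the pair $(B,u)$ lies in $\SU(n)\ltimes\C^n$ and reverses $g$. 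I expect the main obstacle to be precisely this bookkeeping step, especially the edge case $\dim E_1=1$ with $w_1\neq 0$, where $C|_{E_1}$ is rigidly determined as a scalar and the determinant correction must be absorbed entirely into the non-trivial eigenspaces of $A$ — which is where the hypothesis $A\neq I_n$ is truly used.

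For the final assertion with $A=I_n$ and $n\geq 2$: the conjugation formula collapses to $(B,u)(I_n,w)(B,u)^{-1}=(I_n,Bw)$, so strong reversibility reduces to producing an involution $B\in\SU(n)$ with $Bw=-w$. If $w=0$, take $B=I_n$; otherwise let $B$ act as $-1$ on $\mathrm{span}_{\C}(w)$ and on one unit vector orthogonal to $w$, and as $+1$ on the remaining orthogonal complement. Then $B^2=I_n$, $\det B=1$, and $(B,0)$ is the required involution conjugating $g$ to $g^{-1}$.
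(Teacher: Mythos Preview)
Your proposal is correct but follows a genuinely different route from the paper's proof. For $(3)\Rightarrow(1)$, the paper first conjugates $g$ into the canonical form $(A,v)$ with $A$ diagonal and $v\in E_1$ (via \lemref{l2.1}), and then writes down an explicit block-diagonal reverser $B$ built from $2\times 2$ blocks $K=\begin{pmatrix}0&1\\1&0\end{pmatrix}$ together with $\pm I$ blocks; a short case analysis (on the parity of $r+t$, and on whether $s\neq 0$ or $r\neq 0$) fixes the determinant, in one case by replacing a single $K$ by $J=\begin{pmatrix}0&-1\\1&0\end{pmatrix}$. You instead work coordinate-free with the original $w$: after reducing the translation equation to the single condition $B|_{E_1}(w_1)=-w_1$ via the image description $\mathrm{im}(I-A^{-1})=E_1^\perp$, you start from any reverser $B_0\in\SU(n)$ and correct it by an element of the centralizer $Z_{\U(n)}(A)=\prod_\lambda \U(E_\lambda)$, absorbing the determinant into a nontrivial eigenspace (which exists precisely because $A\neq I_n$). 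Your argument is more conceptual and avoids the case split; the paper's is more constructive and yields the reversing element explicitly. For $(2)\Leftrightarrow(3)$ the paper simply cites \cite[Theorem 4.22]{FS} and \cite[Proposition 3.3]{GP}, whereas your scalar-multiple trick $B\mapsto\lambda B$ gives a self-contained passage from reversibility in $\U(n)$ to reversibility in $\SU(n)$. The treatment of $A=I_n$ is essentially the same in both: build an involution in $\SU(n)$ acting as $-1$ on a suitable even-dimensional subspace containing $w$.
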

From the proof of the above theorem,  the following corollary follows immediately. 
  \begin{corollary}\label{srsu-0}
Let $g=(A, w)$ in $\SU(n) \ltimes \C^n$. Suppose $A$ has an eigenvalue $-1$. 
Then the following are equivalent. 
\begin{enumerate} 
\item $g$ is strongly reversible in $\SU(n) \ltimes \C^n$.  
\item The characteristic polynomial of $A$ is self-dual.  
\end{enumerate} 
\end{corollary}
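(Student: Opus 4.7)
The forward implication is immediate: a strongly reversible element is reversible, and then \thmref{rsu} forces the characteristic polynomial of $A$ to be self-dual.

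For the converse, suppose $\chi_A$ is self-dual and $-1 \in \mathrm{Spec}(A)$. The plan is to revisit the construction from the proof of \thmref{srau}, which produces an involution $(B,u) \in \U(n) \ltimes \C^n$ conjugating $g$ to $g^{-1}$, and to exploit the presence of $V_{-1}$ to force $B \in \SU(n)$. The conjugating matrix $B$ is built block-wise with respect to the orthogonal eigenspace decomposition $\C^n = \bigoplus_\lambda V_\lambda$: on each paired block $V_\lambda \oplus V_{\lambda^{-1}}$ with $\lambda \neq \pm 1$ it acts as the standard swap via a chosen unitary isomorphism $V_\lambda \xrightarrow{\sim} V_{\lambda^{-1}}$, while on $V_{\pm 1}$ the restriction $B|_{V_{\pm 1}}$ may be prescribed as an arbitrary unitary involution of that subspace. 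Since $\dim V_{-1} \geq 1$, I have the freedom to multiply $B|_{V_{-1}}$ by a hyperplane reflection in $V_{-1}$ (which commutes with $A|_{V_{-1}} = -I$ and so preserves $B^2 = I_n$ and $BAB^{-1} = A^{-1}$); this flips the sign of $\det(B)$ and lets me arrange $\det(B) = 1$.

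It remains to verify that a compatible translation $u$ still exists. Decomposing $w = \sum_\mu w_\mu$ along the eigenspaces of $A$, the identity $(B,u) g (B,u)^{-1} = g^{-1}$ splits into the component equations $(\mu - 1) u_\mu = -w_\mu - \mu B w_{\mu^{-1}}$, to be solved together with the involution constraint $(B+I)u = 0$. On $V_{-1}$ (where $B$ preserves the subspace), the first equation yields the unique solution $u_{-1} = \tfrac{1}{2}(I - B|_{V_{-1}}) w_{-1}$, and this automatically satisfies $(B+I)u_{-1} = 0$ because $(B+I)(I-B) = I - B^2 = 0$. In particular, no compatibility constraint is imposed on $w_{-1}$ and no compatibility constraint is imposed on the choice of involution $B|_{V_{-1}}$, so the determinant adjustment above does not obstruct the existence of $u$. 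On the remaining blocks the equations are untouched by our modification and are solved exactly as in the proof of \thmref{srau}. The resulting $(B,u) \in \SU(n) \ltimes \C^n$ is the desired involution conjugating $g$ to $g^{-1}$.

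The main obstacle, should one write this out in full, is the bookkeeping that confirms the $V_{-1}$-equation is simultaneously solvable for \emph{any} choice of involution $B|_{V_{-1}}$, so that the determinant of $B$ can be freely tuned without upsetting the compatibility conditions already arranged on $V_1$ and on the paired blocks. The short identity $(B+I)(I-B) = 0$ handles the $V_{-1}$-block cleanly, and block-diagonality of $A$ together with the block form of $B$ ensures that the modification on $V_{-1}$ is genuinely independent of the other components; together these observations turn the corollary into an immediate consequence of the construction used to prove \thmref{rsu}.
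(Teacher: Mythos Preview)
Your argument is correct and is essentially the paper's own: the corollary is read off directly from the proof of \thmref{rsu}, where in the case $s\neq 0$ the reversing matrix $B$ of \eqref{c} is upgraded to an \emph{involution} in $\SU(n)$ by flipping a single sign in the $I_s$-block over $V_{-1}$. The only cosmetic difference is that the paper first passes to the normal form of \lemref{l2.1} (so the translation lies in $V_1$ and the conjugator is simply $(B,\mathbf{0})$), whereas you keep a general $w$ and solve for $u$ block by block; your phrase ``solved exactly as in the proof of \thmref{srau}'' is thus slightly imprecise (that proof takes $u=\mathbf{0}$ after the reduction), but the direct check you sketch is routine.
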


When $A$ has no eigenvalue $-1$, the situation is more subtle  in $\SU(n) \ltimes \C^n$ and $g$ may not be strongly reversible even if $A$ is so. We classify all such elements in $\SU(n) \ltimes \C^n$ which are not strongly reversible even if the linear part is strongly reversible. 

  \begin{theorem} \label{nsrsu} Let $g \in \SU(n)\ltimes \C^n$ be such that linear part of $g$ is strongly reversible. Then $g$ is not strongly reversible if and only if up to conjugacy $g$ is of the form $g = (A,v )$ such that $A= \textnormal{diag}(e^{i\theta_1}, e^{-i\theta_1},\dots, e^{i\theta_{2r}}, e^{-i\theta_{2r}}, 1)$ and 
 $v = [0,0, \dots, 0,v_1 ]$, where $v_1 \neq 0$, $r \geq 0$, and $ \theta_k  \in  (0 ,\pi), ~ k=1, \dots, r$. 
\end{theorem}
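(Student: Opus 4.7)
\textbf{Proof plan for Theorem~\ref{nsrsu}.}

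The approach is to write down the defining equations for strong reversibility of $g=(A,v)\in \SU(n)\ltimes \C^n$, normalize $g$ up to affine conjugacy, and reduce the problem to the existence of an involution in $\SU(n)$ satisfying both a conjugation identity and a sign condition on $v$. Using the product rule $(B_1,u_1)(B_2,u_2)=(B_1B_2,\,B_1u_2+u_1)$, a direct computation shows that $hgh^{-1}=g^{-1}$ for an involution $h=(B,u)$ is equivalent to the five conditions
\[ B^2=I,\quad \det B=1,\quad Bu=-u,\quad BAB^{-1}=A^{-1},\quad (B+A^{-1})v=(A^{-1}-I)u. \]

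Next I would simplify $g$ by two conjugations. First, conjugating by $(I,w)$ changes $v$ to $v+(I-A)w$; since $A$ is unitary, $\mathrm{Im}(I-A)=\ker(A-I)^{\perp}$, so up to conjugacy I may assume $v\in V_+:=\ker(A-I)$. Second, conjugating by $(C,0)$ with $C\in \SU(n)$ diagonalizes $A$ (any unitary diagonalization can be forced into $\SU(n)$ by absorbing a global scalar), yielding
\[ A=\textnormal{diag}\bigl(1^{d_+},\,(-1)^{d_-},\,e^{i\theta_1},e^{-i\theta_1},\dots,e^{i\theta_m},e^{-i\theta_m}\bigr), \qquad \theta_k\in(0,\pi). \]
Because $v\in V_+$ gives $A^{-1}v=v$, the last of the five conditions reads $Bv+v=(A^{-1}-I)u$. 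Since $B$ preserves $V_+$ (the eigenvalue $1$ is fixed by $\lambda\mapsto\lambda^{-1}$), the left side lies in $V_+$, while $(A^{-1}-I)u\in \mathrm{Im}(A^{-1}-I)=V_+^{\perp}$. Hence $Bv=-v$ and one may take $u=0$, reducing the problem to: $g$ is strongly reversible if and only if there exists an involution $B\in \SU(n)$ with $BAB^{-1}=A^{-1}$ and $Bv=-v$.

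The final step is a determinant count on such $B$. On each conjugate pair $V_{\theta_k}\oplus V_{-\theta_k}$ the operator $B$ is a unitary swap of determinant $-1$, so $\det B=\det(B|_{V_+})\cdot\det(B|_{V_-})\cdot(-1)^m$. Assume $v\neq 0$, since otherwise $g=(A,0)$ is trivially strongly reversible. If $d_+\geq 2$, I choose $B|_{V_+}$ to be $-1$ on $\mathrm{span}(v)$ and an involution on its orthogonal complement in $V_+$; both signs of $\det(B|_{V_+})$ are then attainable, so combined with the freedom on $V_-$ one achieves $\det B=1$. If $d_+=1$, the condition $Bv=-v$ forces $B|_{V_+}=-1$, hence $\det(B|_{V_+})=-1$, and $\det B=1$ reduces to $\det(B|_{V_-})\cdot(-1)^m=-1$; this is realizable when $d_-\geq 1$ but, when $d_-=0$, demands $m$ odd. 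Therefore the only obstruction occurs precisely when $d_+=1$, $d_-=0$, and $m=2r$ is even, which is the canonical form of $g$ asserted in the theorem. The main obstacle I expect is the last case analysis: one must carefully track which sign of $\det(B|_{V_+})$ is compatible with $Bv=-v$ in each dimension, and verify that completion to $\det B=1$ fails only in the narrow configuration identified.
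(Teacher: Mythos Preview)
Your proposal is correct and follows the same underlying idea as the paper: decompose any reversing involution $B$ along the eigenspace blocks of $A$, observe that the determinant on each conjugate-pair block $V_\theta\oplus V_{-\theta}$ is forced to be $(-1)^{\dim V_\theta}$ (this is the content of the paper's Lemma~\ref{le2.3}), and then check when the remaining freedom on $V_+$ and $V_-$ suffices to reach $\det B=1$.

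The packaging differs. You first prove the clean reduction that one may take the translational part $u$ of the conjugator to be $0$, so that strong reversibility of $g=(A,v)$ is equivalent to the existence of an involution $B\in\SU(n)$ with $BAB^{-1}=A^{-1}$ and $Bv=-v$; from there a single determinant/dimension count handles both directions at once. The paper does not isolate this reduction: for the ``only if'' direction it assembles explicit block matrices case by case (Lemmas~\ref{srsu-1} and~\ref{lem-srsu-2}, Corollary~\ref{cor-id-SU(n)}), and for the ``if'' direction it keeps $u$ in play, extracting $\alpha=-1$ from the equation $(A^{-1}-I)(v-u)=-(B+I)v$ before applying Lemma~\ref{le2.3}. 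Your route is a bit more streamlined; the paper's is more concrete. One small point to tighten in your write-up: when you say ``on each conjugate pair $V_{\theta_k}\oplus V_{-\theta_k}$ the operator $B$ is a unitary swap of determinant $-1$'', make explicit that you are grouping by distinct eigenvalue and that the block has the anti-diagonal form $\left(\begin{smallmatrix}0&P\\Q&0\end{smallmatrix}\right)$ with $PQ=I$, so that its determinant is $(-1)^{\dim V_\theta}$ regardless of multiplicity.
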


The classification of strongly reversible elements in $\SU(n) \ltimes \C^n$ follows from the above theorem. 
\begin{corollary} 
An element $g$ in $\SU(n) \ltimes \C^n$ is strongly reversible if and only if the linear part of $g$ is strongly reversible in $\SU(n)$ and $g$ does not belong to the family given in \thmref{nsrsu}. 
\end{corollary}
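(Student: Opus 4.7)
The plan is to extract the corollary directly from \thmref{nsrsu}, which has done essentially all the work. The statement is an if-and-only-if, so I would organize the argument around the two implications.

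For the forward direction, I would start with $g=(A,w)$ strongly reversible in $\SU(n)\ltimes\C^n$, so $g=\tau_1\tau_2$ for involutions $\tau_i=(B_i,u_i)$ in the affine group. A short computation gives $\tau_i^2=(B_i^2,\,(B_i+I_n)u_i)$, so the condition $\tau_i^2=\mathrm{id}$ forces $B_i^2=I_n$, i.e.\ each $B_i$ is an involution in $\SU(n)$. Since $A=B_1B_2$, the linear part $A$ is a product of two involutions in $\SU(n)$, hence strongly reversible there. The fact that $g$ cannot lie in the family of \thmref{nsrsu} is automatic: that family was identified precisely as elements with strongly reversible linear part that are themselves \emph{not} strongly reversible.

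For the backward direction, I would argue by contrapositive. Assume the linear part of $g$ is strongly reversible in $\SU(n)$ but $g$ is not strongly reversible in $\SU(n)\ltimes\C^n$. Then $g$ satisfies the hypotheses of \thmref{nsrsu}, and the theorem asserts that up to conjugacy $g$ must have the explicit normal form $(A,v)$ with $A=\textnormal{diag}(e^{i\theta_1},e^{-i\theta_1},\dots,e^{i\theta_{2r}},e^{-i\theta_{2r}},1)$ and $v=[0,\dots,0,v_1]$ with $v_1\neq 0$. Since strong reversibility is invariant under conjugation in $\SU(n)\ltimes\C^n$, membership in this family (up to conjugacy) is a conjugacy-invariant condition, so $g$ belongs to the family. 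This is exactly the contrapositive of what we want.

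There is essentially no obstacle here, since \thmref{nsrsu} is assumed and the affine involution computation is elementary. The only subtle point worth stating cleanly is that the family described in \thmref{nsrsu} is to be interpreted up to conjugacy in $\SU(n)\ltimes\C^n$, so that the corollary's ``does not belong to the family'' condition is well-defined and conjugacy-invariant, which is exactly what makes both implications line up with strong reversibility as a conjugacy-invariant property.
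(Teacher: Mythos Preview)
Your proposal is correct and matches the paper's approach: the paper states the corollary immediately after \thmref{nsrsu} without a separate proof, precisely because both implications are direct consequences of that theorem together with \lemref{l2.3}. Your forward-direction argument that the linear part must be strongly reversible reproves \lemref{l2.3} via the product-of-involutions characterization rather than citing it, but this is a cosmetic difference; otherwise the logic is identical.
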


\medskip Note that the classification of the reversible and the strongly reversible elements in $\U(n, \F)$ is intimately related to the classification of reversibility in $\U(n, \F) \ltimes \F^n$. The complete classifications of reversibility in $\U(n)$ and $\SU(n)$ are well-known, see \cite{FS}, \cite{GP}. However, a complete classification of the strongly reversible elements in $\Sp(n)$ was not known until recently. It was raised as an open problem in \cite{FS} to classify the strongly reversible elements in $\Sp(n)$, see \cite[p. 91]{FS}. Bhunia and Gongopadhyay have given a solution to this problem in \cite[Theorem 1.2]{BG}. However, the proof in \cite{BG} is geometric and uses the notion of `projective points'. In this paper, we revisit this problem and have given a different proof to the classification of strongly reversible elements in $\Sp(n)$. Our proof uses only simple quaternionic linear algebra and may be of independent interest.

\subsection{Structure of the paper} We classify the strongly reversible elements in $\Sp(n)$ in \secref{spn}. In \secref{lemmas}, we note down a few lemmas related to the conjugacy in $\U(n, \F) \ltimes \F^n$. These lemmas are used in the proofs of the main theorems. In \secref{Reasp}, we prove \thmref{revsp} and \thmref{srasp}.  In \secref{Reasu},  we prove \thmref{srau}.   We investigate reversibility in $\SU(n) \ltimes \C^n$ in \secref{Reasu1}, \thmref{rsu} and  \thmref{nsrsu} are proven in this section. 

\medskip 
\textbf{Notation:} Let $G$ denote one of the groups $\Sp(n) \ltimes \H^n$,  $\U(n) \ltimes \C^n$, or $\SU(n) \ltimes \C^n$.  Let $L(G)$ denote the linear part of $G$. The identity element in  the group $L(G)$ is denoted by $I_n$.  The identity of $G$ is denoted by $I$. The zero element of $\F^n$ is denoted by $\mathbf{0}$.

 \section{Reversibility in $\Sp(n)$}\label{spn}
 
The classification of reversible and strongly reversible elements in $\Sp(n)$ has been obtained recently in \cite{BG} where it has been proved that every element in $\Sp(n)$ is reversible.  The following theorem to classify the strongly reversible elements was also obtained in that paper. 

\begin{theorem}\label{t1.1}{\cite[Theorem 1.2 ]{BG} }
An element ${g}$ in $\Sp(n)$ is strongly reversible if and only if every eigenvalue class of ${g}$ is either $\pm 1 $ or of even multiplicity.
\end{theorem}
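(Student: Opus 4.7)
The plan is to diagonalize $g$ and analyze how any strongly reversing involution must act on the complex eigenspaces, then construct such an involution explicitly when the parity condition holds. By the spectral theorem for $\Sp(n)$, I may assume after conjugation that $g = \mathrm{diag}(e^{i\theta_1},\dots,e^{i\theta_n})$ with $\theta_j \in [0,\pi]$. For each distinct angle let $\V_\theta = \{v \in \H^n : gv = v e^{i\theta}\}$; this is a right $\C$-subspace (with $\C = \R \oplus \R i$) of complex dimension $m_\theta$. The key feature of the quaternionic setting is that for $\theta \in (0,\pi)$ the similarity class $[e^{i\theta}]$ also contains $e^{-i\theta}$, and right multiplication by $j$ gives an $\R$-linear bijection $\V_\theta \to \V_{-\theta}$, so both have the same complex dimension $m_\theta$. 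Eigenspaces belonging to distinct similarity classes are $\Phi$-orthogonal, giving
\[ \H^n = \V_0 \oplus \V_\pi \oplus \bigoplus_{\theta \in (0,\pi)} \bigl(\V_\theta \oplus \V_{-\theta}\bigr) \]
as a $\Phi$-orthogonal sum of the full eigenspaces $\V_\theta \oplus \V_{-\theta}$.

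For necessity, suppose $h \in \Sp(n)$ is an involution with $hgh = g^{-1}$. The relation $gh = hg^{-1}$ forces $h(\V_\theta) = \V_{-\theta}$ for every $\theta \in (0,\pi)$. Fixing such a $\theta$ and a $\C$-orthonormal basis $v_1,\dots,v_{m_\theta}$ of $\V_\theta$ (with respect to $\Phi|_{\V_\theta}$, which is $\C$-valued), the family $v_1 j,\dots,v_{m_\theta} j$ is a $\C$-orthonormal basis of $\V_{-\theta}$, and I can write $h(v_k) = \sum_l (v_l j)\, c_{lk}$ for a matrix $M = (c_{lk}) \in \M_{m_\theta}(\C)$. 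Right $\H$-linearity of $h$, together with the identity $jcj = -\bar c$ for $c \in \C$, then forces $h(v_k j) = -\sum_l v_l \bar c_{lk}$. The involution condition $h^2 = I$ reduces to the matrix equation $\bar M M = -I_{m_\theta}$, and taking determinants gives $|\det M|^2 = (-1)^{m_\theta}$; non-negativity of the left side forces $m_\theta$ to be even.

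For sufficiency, assume every $m_\theta$ with $\theta \in (0,\pi)$ is even, say $m_\theta = 2 s_\theta$, and build $h$ block by block along the above decomposition. On $\V_0 \oplus \V_\pi$ the map $g$ equals $\pm I$ and already satisfies $g = g^{-1}$, so I set $h = \mathrm{id}$ there. On each $\V_\theta \oplus \V_{-\theta}$ with $\theta \in (0,\pi)$, I pick a $\C$-orthonormal basis of $\V_\theta$ and take $M$ to be the real skew-symmetric orthogonal matrix $\begin{pmatrix} 0 & -I_{s_\theta} \\ I_{s_\theta} & 0 \end{pmatrix}$, defining $h(v_k) = \sum_l (v_l j) M_{lk}$ and extending by $\H$-linearity. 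Then $\bar M M = M^2 = -I$ gives $h^2 = I$, while $\bar M^{\top} M = I$ together with a short sesquilinear check on the cross terms $\Phi(v_k, v_l j) = \delta_{kl}\, j$ yields $\Phi$-preservation, so the assembled $h$ lies in $\Sp(n)$ and satisfies $hgh = g^{-1}$.

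The main technical subtlety will be keeping the two linear structures straight: each $\V_\theta$ with $\theta \in (0,\pi)$ is only a complex (not quaternionic) subspace of $\H^n$, while the target $h$ must be globally right $\H$-linear and $\Phi$-unitary. Once the identity $jcj = -\bar c$ is used to convert $\H$-linearity into a concrete relation between the matrices of $h|_{\V_\theta}$ and $h|_{\V_{-\theta}}$, everything funnels into the single equation $\bar M M = -I_{m_\theta}$, whose solvability is controlled exactly by the parity of $m_\theta$.
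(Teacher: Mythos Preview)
Your proof is correct and follows essentially the same route as the paper: after diagonalizing, the paper writes any reverser on a non-real eigenblock of multiplicity $k$ as $B_1 j$ with $B_1\in\GL(k,\C)$ and obtains $B_1\bar B_1=-I_k$ from $B^2=I$, which is exactly your equation $\bar M M=-I_{m_\theta}$, and the parity obstruction is then read off from the determinant in the same way. For sufficiency the paper's explicit $2\times 2$ block $L=\begin{pmatrix}0&j\\-j&0\end{pmatrix}$ is, up to sign, precisely the matrix your construction produces in the standard basis when $s_\theta=1$, so the two arguments coincide.
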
 

In this section, we give a different and simpler proof of the above theorem. To begin with, note the following well-known result. 
 \begin{lemma} \label{l1.1} {\cite[Theorem  5.3.6. (e)]{rodman} }
  If ${A} \in \Sp(n)$, then there exists ${U} \in \Sp(n)$ such that ${UA}{U}^{-1} = \textnormal{diag} (e^{ i\theta_{1} },e^{ i\theta_{2} } , \dots, e^{ i\theta_{n} } )$, where $ \theta_{s} \in [0,\pi]$ for all $ s \in \{1,2, \dots, n \}$.  \end{lemma}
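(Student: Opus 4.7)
The plan is to proceed by induction on $n$, the key geometric input being that the $\Phi$-orthogonal complement of an eigenline of $A$ is $A$-invariant. The base case $n=1$ is immediate since $\Sp(1)=\{q\in\H:|q|=1\}$, and every unit quaternion is $\Sp(1)$-conjugate to a unique $e^{i\theta}$ with $\theta\in[0,\pi]$.

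For the inductive step, I would first produce a nonzero right eigenvector of $A$. Writing $A=A_1+A_2 j$ with $A_1,A_2\in M_n(\C)$, the complex adjoint
\[
\chi_A=\begin{pmatrix} A_1 & A_2 \\ -\overline{A_2} & \overline{A_1}\end{pmatrix}\in M_{2n}(\C)
\]
lies in $\U(2n)$ because $A\in\Sp(n)$; hence $\chi_A$ has an eigenvalue of unit modulus, and a corresponding complex eigenvector translates back to a unit vector $v\in\H^n$ satisfying $Av=v\lambda$ for some $\lambda\in\H$. Preservation of $\Phi$ forces $\bar{\lambda}\lambda=1$, so $|\lambda|=1$.

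Next I would adjust the similarity-class representative. The similarity class $[\lambda]$ of the unit quaternion $\lambda$ contains a unique complex number $e^{i\theta_1}$ with $\theta_1\in[0,\pi]$, and a unit quaternion $\mu$ can be chosen with $\mu^{-1}\lambda\mu=e^{i\theta_1}$. Setting $v_1:=v\mu$ keeps $|v_1|=1$ and yields $Av_1=v_1 e^{i\theta_1}$. On $W:=v_1^{\perp}$, the identity
\[
e^{-i\theta_1}\,\Phi(v_1,Aw)=\Phi(v_1 e^{i\theta_1},Aw)=\Phi(Av_1,Aw)=\Phi(v_1,w)=0, \qquad w\in W,
\]
shows $AW\subseteq W$, so $A|_W$ lies in the analogue of $\Sp(n-1)$ on $W$. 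The inductive hypothesis produces an orthonormal basis $v_2,\dots,v_n$ of $W$ diagonalizing $A|_W$ with diagonal entries $e^{i\theta_s}$, $\theta_s\in[0,\pi]$. The matrix whose columns are $v_1,v_2,\dots,v_n$ is then the desired element $U^{-1}\in\Sp(n)$.

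The principal obstacle is the combination of two features unique to the quaternionic setting: producing an eigenvector in the first place (the similarity-class structure of eigenvalues obstructs the usual polynomial-root argument and forces us to pass through the complex adjoint $\chi_A$), and simultaneously normalizing the representative into $\{e^{i\theta}:\theta\in[0,\pi]\}$ without destroying orthonormality of the eigenbasis. The latter is tractable only because the conjugating quaternion $\mu$ between $\lambda$ and $e^{i\theta_1}$ can be chosen of unit modulus (both being unit quaternions of equal real part), a small but essential observation that allows the induction to carry through cleanly.
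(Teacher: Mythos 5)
Your argument is correct. Note, however, that the paper offers no proof of this lemma at all: it is quoted verbatim from Rodman \cite[Theorem 5.3.6(e)]{rodman}, so there is nothing internal to compare against. What you have written is a correct, self-contained proof along the standard lines: produce one unit eigenvector via the complex adjoint $\chi_A\in\U(2n)$, normalize the eigenvalue into $\{e^{i\theta}:\theta\in[0,\pi]\}$ by a unit-modulus conjugator (which exists since any two unit quaternions with equal real part are $\Sp(1)$-conjugate; concretely $j^{-1}e^{-i\theta}j=e^{i\theta}$), check that $v_1^{\perp}$ is $A$-invariant using $\Phi(Av_1,Aw)=\Phi(v_1,w)$ together with conjugate-linearity of $\Phi$ in its first slot, and induct. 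All the key steps check out, including the observation that assembling the orthonormal eigenvectors as columns gives $V\in\Sp(n)$ with $V^{-1}AV$ diagonal. The only point worth tightening is the bookkeeping in the complex adjoint: with the decomposition $A=A_1+A_2j$ and vectors written as $a+bj$, right multiplication by $i$ acts as $(a,b)\mapsto(ai,-bi)$, so to make the identification $\H^n\cong\C^{2n}$ right-$\C$-linear one should use $\H^n=\C^n\oplus j\,\C^n$ (or adjust the signs in $\chi_A$ accordingly). This is a convention issue, not a gap; the fact you need --- that $\chi_A$ is unitary and its complex eigenvectors correspond to right eigenvectors of $A$ with complex eigenvalue --- is standard and correct.
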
 
  
 The lemma  above also holds for $\U(n)$ and $\SU(n)$, see {\cite[Corollary 32.9]{Curtis}}. Next, we observe the following about quaternions. 
 \begin{lemma} \label{l1.2}  Let $ \alpha \in (0,\pi )$ and $\beta \in [0,\pi ]$. Let $x \in \H$ be such that $x e^{i \alpha}= e^{i \beta } x$. Then the following holds:
 \begin{enumerate}
 \item  $x=0$ if $ \alpha  \neq  \pm \beta $;
 \item $x \in \C$ if $ \alpha  = \beta $;
 \item $x \in \C j$ if $ \alpha  =-  \beta $.

 \end{enumerate}
\end{lemma}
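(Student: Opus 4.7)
The plan is to exploit the standard decomposition $\H = \C \oplus \C j$ introduced in the paper, together with the fundamental commutation relation $j\mu = \bar\mu j$ for any $\mu \in \C$ (which is immediate from $ji = -ij$).

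First, I would write $x = z + wj$ with $z, w \in \C$, uniquely determined by $x$. Expanding the two sides of $xe^{i\alpha} = e^{i\beta}x$ using the commutation rule gives
\begin{equation*}
xe^{i\alpha} = ze^{i\alpha} + wj e^{i\alpha} = ze^{i\alpha} + w e^{-i\alpha} j,
\qquad
e^{i\beta} x = e^{i\beta} z + e^{i\beta} w j.
\end{equation*}
Since $\C$ and $\C j$ are complementary real subspaces of $\H$, the equation splits into the two complex equations
\begin{equation*}
z\,(e^{i\alpha} - e^{i\beta}) = 0, \qquad w\,(e^{-i\alpha} - e^{i\beta}) = 0,
\end{equation*}
where I have used commutativity of $\C$ to move the scalar factors past $z$ and $w$.

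From here the three cases fall out directly. If $\alpha \neq \pm \beta$ (interpreting equalities modulo $2\pi$ within the given ranges), both complex scalars $e^{i\alpha} - e^{i\beta}$ and $e^{-i\alpha} - e^{i\beta}$ are nonzero, forcing $z = w = 0$, hence $x = 0$. If $\alpha = \beta$, then the first equation gives no constraint on $z$, while the second becomes $w(e^{-i\alpha} - e^{i\alpha}) = -2i\sin(\alpha)\, w = 0$; since $\alpha \in (0,\pi)$ we have $\sin \alpha \neq 0$, forcing $w = 0$ and $x = z \in \C$. Symmetrically, in the case $\alpha = -\beta$ (i.e.\ $e^{i\beta} = e^{-i\alpha}$), the second equation becomes trivial while the first reduces to $2i\sin(\alpha)\, z = 0$, again yielding $z = 0$ and $x = wj \in \C j$.

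There is essentially no obstacle: the proof is a short algebraic manipulation. The only point to be careful about is the use of $je^{i\alpha} = e^{-i\alpha} j$ (equivalently, that conjugation by $j$ sends a complex number to its conjugate), and the nonvanishing of $\sin\alpha$ granted by the hypothesis $\alpha \in (0,\pi)$, which is precisely what distinguishes the two nontrivial cases from each other.
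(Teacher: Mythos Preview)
Your proof is correct and follows essentially the same approach as the paper: write $x = z + wj$, use $j\mu = \bar\mu j$ to separate the equation into its $\C$- and $\C j$-components, and read off the three cases. If anything, you spell out the case analysis more explicitly than the paper, which simply records the two equations $ue^{i\alpha}=ue^{i\beta}$ and $ve^{-i\alpha}=ve^{i\beta}$ and leaves the rest to the reader.
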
 
    
 \begin{proof}
 Let $ x = u + v j \in \H$ be such that $x e^{i \alpha}= e^{i \beta } x, \hbox{ where } u, v \in \C$. 
 This implies \begin{equation}\label{eq2.1}
   (u + v j) e^{i \alpha}= e^{i \beta } (u + v j).\end{equation}Using $  jw =  \bar{w} j $ for all $ w \in \C $ and on comparing both sides of the Equation \eqref{eq2.1}, we get the following:
 \begin{equation}\label{eq2.2}
 u  e^{i \alpha} = u e^{i \beta } \hbox{ and }  v e^{-i \alpha} = v e^{i \beta}.
 \end{equation}

The proof of the lemma now follows from Equation \eqref{eq2.2}.   \end{proof}

 \begin{lemma} \label{l1.3} Let ${A} \in  
\Sp(n)$ be such that the eigenvalue classes of ${A }$ are either $ \pm 1 $ or have even multiplicities. Then ${A}$ is strongly reversible in $\Sp(n)$. \end{lemma}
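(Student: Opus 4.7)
My plan is to diagonalize $A$ via \lemref{l1.1}, reorder the eigenvalues into blocks, and then build an explicit block-diagonal involution that conjugates the resulting diagonal form to its inverse. Since strong reversibility in $\Sp(n)$ is preserved under conjugation, by \lemref{l1.1} and a suitable permutation (conjugating by a permutation matrix, which is itself in $\Sp(n)$) I may replace $A$ by
\[
D \;=\; I_a \;\oplus\; (-I_b) \;\oplus\; \bigoplus_{k=1}^{r} \mathrm{diag}(e^{i\theta_k}, e^{i\theta_k}),
\]
where $a+b+2r = n$ and each $\theta_k \in (0,\pi)$. The hypothesis that every eigenvalue class other than $\pm 1$ has even multiplicity is exactly what lets me split the remaining eigenvalues into such $2 \times 2$ pairs.

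Next I would construct the conjugating involution $H$ block-by-block. On the $\pm 1$-blocks I take $I_a$ and $I_b$; these commute with and invert those diagonal blocks trivially. For each pair $\mathrm{diag}(e^{i\theta_k}, e^{i\theta_k})$ I use
\[
h_k \;=\; \begin{pmatrix} 0 & -j \\ j & 0 \end{pmatrix} \in \Sp(2).
\]
A direct computation gives $h_k^{\bigstar} = h_k$ and $h_k^{2} = I_2$, so $h_k$ is an involution in $\Sp(2)$; and using the quaternionic identity $j e^{i\theta} = e^{-i\theta} j$ (the $\alpha = -\beta$ case of \lemref{l1.2}) one checks
\[
h_k \, \mathrm{diag}(e^{i\theta_k}, e^{i\theta_k}) \, h_k \;=\; \mathrm{diag}(e^{-i\theta_k}, e^{-i\theta_k}).
\]
Setting $H = I_a \oplus I_b \oplus \bigoplus_{k=1}^{r} h_k$ then yields an involution in $\Sp(n)$ with $HDH = D^{-1}$, proving $D$, and hence $A$, strongly reversible.

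The main obstacle is producing a single matrix that is simultaneously a $\Sp(n)$-unitary, an involution, and a conjugator inverting the eigenvalues. The naive choice of a diagonal matrix with $j$ in each eigenvalue-coordinate fails because $j^2 = -1$, so such a matrix squares to $-I$ rather than $I$. This is precisely where the even-multiplicity hypothesis becomes essential: pairing two copies of a non-real eigenvalue lets me place the $j$'s off-diagonal, making the square of each $2 \times 2$ block equal to $-j^2 I_2 = I_2$. \lemref{l1.2} confirms that the off-diagonal entries must lie in $\C j$, and requiring the resulting $2 \times 2$ block to be both unitary and an involution essentially pins the choice down.
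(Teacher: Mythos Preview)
Your proof is correct and follows essentially the same approach as the paper: diagonalize via \lemref{l1.1}, pair the non-real eigenvalues using the even-multiplicity hypothesis, and conjugate each $2\times 2$ block by an off-diagonal matrix with entries in $\C j$ (the paper uses $L=\begin{pmatrix}0&j\\-j&0\end{pmatrix}$, which is just $-h_k$) while leaving the $\pm 1$ blocks fixed. The only differences are cosmetic---the ordering of blocks and the sign of the $2\times 2$ conjugator---and your closing paragraph explaining why a diagonal choice fails is a nice addition but not a departure in method.
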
 
\begin{proof} 
 Let ${A} \in \Sp(n)$. So by using \lemref{l1.1}, up to conjugacy in $\Sp(n)$, we can assume that: 
\begin{equation}\label{elem-Sp(n)}
A= e^{ i\theta_{1} } {I}_{2k_1}  \oplus  \ e^{ i\theta_{2} } {I}_{2k_2}  \oplus \dots \oplus e^{ i\theta_{r} } {I}_{2k_r} \oplus -  {I}_{s } \oplus  {I}_{t }, \end{equation} where $r, s,t \in  \N \cup \{0\} $, and if $r \neq 0$ then $ \theta_{m} \in (0 ,\pi )$ for all $ m \in \{ 1,2 , \dots, r \}$, and ${2k_1} + {2k_2} + \dots + {2k_r}  + s + t = n$.
 
 Let $L = \begin{pmatrix}
0 & j\\
 -j & 0 \\ 
 \end{pmatrix}$ in  $\Sp(2) $. Then $L^2 = I_{2}$ in $\Sp(2)$. Consider 
\begin{equation} \label{b}  {B} =  \begin{pmatrix} \diagentry{L} \\
&\diagentry{L}\\
&&\diagentry{\ddots}\\
&&&\diagentry{L}\\
&&&&\diagentry{I_{s+t}} \end{pmatrix}\end{equation} 
in $\Sp(n)$. Then
${B} {A}{B}^{-1} = {A}^{-1}$ and ${B}^2 = {I}_{n}$. Hence ${A}$ is strongly reversible in $\Sp(n)$.
\end{proof}

\begin{lemma}\label{l1.4} 
  Suppose ${A} \in \Sp(n)$ has an eigenvalue class $[\lambda]$ of odd multiplicity $k$ such that $ \lambda  \neq  \pm 1$. Define $\mathcal{R}({A})= \{ {B} \in \GL(n,\H) : {BAB}^{-1} = {A}^{-1} \}$. Then up to conjugacy, an element ${B}$ in $\mathcal{R}({A})$ is of the form  
$$ \begin{pmatrix}
 B_{1}  j & \\
 &  B_{2} \\ 
 \end{pmatrix},$$
where $B_{1}  \in  \GL(k,\C), B_{2} \in \GL(n-k,\H)$.
\end{lemma}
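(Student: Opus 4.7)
The plan is to reduce via \lemref{l1.1} to a diagonal form for $A$: conjugating inside $\Sp(n)$, I may assume $A = \mathrm{diag}(e^{i\theta_1}, \dots, e^{i\theta_n})$ with all $\theta_s \in [0,\pi]$. Since $[\lambda] = [e^{i\theta}]$ with $\theta \in (0,\pi)$ has multiplicity $k$, after reordering the basis I can write $A = e^{i\theta} I_k \oplus D$, where $D$ is diagonal with entries $e^{i\theta_q}$ for $\theta_q \in [0,\pi] \setminus \{\theta\}$. The ``up to conjugacy'' in the statement refers precisely to this change of basis.

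I then take an arbitrary $B \in \mathcal{R}(A)$, write it in the conforming $(k, n-k)$ block decomposition $B = \bigl(\begin{smallmatrix} B_{11} & B_{12} \\ B_{21} & B_{22} \end{smallmatrix}\bigr)$, and translate the identity $BA = A^{-1} B$ into the entrywise relations $b_{pq}\, A_{qq} = A_{pp}^{-1}\, b_{pq}$. For $p, q \le k$ this reads $b_{pq}\, e^{i\theta} = e^{-i\theta}\, b_{pq}$; writing $b_{pq} = u + v j$ with $u, v \in \C$ and applying $j w = \bar w j$ exactly as in \lemref{l1.2}, I deduce $u\, e^{i\theta} = e^{-i\theta}\, u$ (forcing $u = 0$ since $\theta \in (0,\pi)$) while $v \in \C$ remains unconstrained. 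Hence $B_{11} = B_1 j$ for some $B_1 \in M(k, \C)$. For the cross blocks, e.g.\ $p \le k, q > k$, the relation $b_{pq}\, e^{i\theta_q} = e^{-i\theta}\, b_{pq}$ splits into $u\, e^{i\theta_q} = e^{-i\theta}\, u$ and $v\, e^{-i\theta_q} = e^{-i\theta}\, v$; since $\theta \in (0,\pi)$ and $\theta_q \in [0,\pi] \setminus \{\theta\}$, neither $e^{i\theta_q} = e^{-i\theta}$ nor $e^{-i\theta_q} = e^{-i\theta}$ can hold, forcing $u = v = 0$. The symmetric block $B_{21}$ vanishes by the same computation.

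This yields $B = \bigl(\begin{smallmatrix} B_1 j & 0 \\ 0 & B_{22} \end{smallmatrix}\bigr)$, and the invertibility of $B$ in $\GL(n, \H)$ immediately gives $B_1 \in \GL(k, \C)$ and $B_2 := B_{22} \in \GL(n-k, \H)$. The only subtlety is the bookkeeping ruling out accidental matches $\theta_q = \pm \theta$ in the off-diagonal analysis; but since $\theta \in (0, \pi)$ automatically forces $-\theta \notin [0, \pi]$, no genuine obstacle arises. I expect the whole argument to be a direct application of the splitting $\H = \C \oplus \C j$ together with the commutation identities of \lemref{l1.2}, and the odd-multiplicity hypothesis on $[\lambda]$ will only become relevant when this lemma is invoked downstream (not in the proof itself).
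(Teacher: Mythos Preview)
Your proof is correct and follows essentially the same approach as the paper's: diagonalize $A$ via \lemref{l1.1}, write the relation $BA = A^{-1}B$ entrywise, and use the splitting $\H = \C \oplus \C j$ (exactly as in \lemref{l1.2}) to force the $(1,1)$-block into $\C j$ and the off-diagonal blocks to vanish. Your observation that the odd-multiplicity hypothesis plays no role in this lemma itself (only downstream in \lemref{l1.5}) is also accurate.
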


\begin{proof}  Let $\theta_{1} \in (0 ,\pi  )$ be such that $e^ {  i\theta_{1} } $ is a chosen representative of the eigenvalue class $[\lambda]$ of odd multiplicity $k$.  Let us first consider the case when $[\lambda]$ is the unique eigenvalue class of $A$. Up to conjugacy in $\Sp(n)$, we can assume that : 
$$A=[ a_{s,t}]_{1\leq s,t \leq n}  = e^{i\theta_{1} } {I_{n}}.$$
 Let ${B} = [b_{s,t}]_{1\leq s,t \leq n} \in \mathcal{R}({A})$.  Now, $ {BA}{B}^{ -1} = { A}^{ -1}$ implies $( b_{s,t})e^{i\theta_{1} } =  e^{- i\theta_{1} } ( b_{s,t}) ~ \text{if  } 1\leq s,t \leq n$.  So by using \lemref{l1.2}, we get  $( b_{s,t}) =  ( w_{s,t}) j \hbox{ for  some} \ w_{s,t} \in \C $. Therefore, we get $B = B_1 j $, where $B_{1}  = [ w_{s,t}]_{1\leq s,t \leq n} \in \GL(n,\C )$.

 Now, consider the case when $A$ has at least two distinct eigenvalue classes. Up to conjugacy in $\Sp(n)$, we can assume that: 
$$A=[ a_{s,t}]_{1\leq s,t \leq n}  = e^{i\theta_{1} } {I_{k}} \oplus D, \hbox{where} \ D= \textnormal{diag} (e^ {i\theta_{k+1}}, e^ {i\theta_{k+2}},\dots, e^ {i\theta_n}),$$ where $\theta_{s} \in [0,\pi ]$, $\theta_{1} \neq  \theta_{s}$ for all $k +1\leq s \leq n$.  Note that $a_{s,t}=0$ for all $s \neq t$.

Let ${B} = [b_{s,t}]_{1\leq s,t \leq n} \in \mathcal{R}({A})$. From $ {BA}{B}^{ -1} = { A}^{ -1}$, we get  
$${BA}  = { A}^{ -1} {B} 
    \Longleftrightarrow  ( b_{s,t})( a_{t,t}) = (a_{s,s}^{-1}) ( b_{s,t}), \hbox{for all }  1\leq s,t \leq n.$$
This implies 
     $$( b_{s,t})e^{i\theta_{1} } =  e^{- i\theta_{1} } ( b_{s,t}) ~ \text{if  } 1\leq s,t \leq k;$$
     $$( b_{s,t}) e^{i\theta_{t} }   = e^{- i\theta_{1} } ( b_{s,t}) ~ \text{if  } 1\leq s \leq k, k +1\leq t \leq n;$$
   $$( b_{s,t}) e^{i\theta_{1} }   = e^{- i\theta_{s} } ( b_{s,t}) ~\text{if  }  k +1\leq s \leq n, 1\leq t \leq k.$$

As we have chosen $\theta_{1} \in (0,\pi ),  ~\theta_{s} \in [0,\pi ]$ such that $\theta_{1} \neq  \theta_{s}$ for all $k +1\leq s \leq n, s \in \N$. Therefore, by using \lemref{l1.2}, we get the following:

    \medskip 
$\begin{cases}
     ( b_{s,t}) = ( w_{s,t}) j  \hbox{ for  some}  \ w_{s,t} \in \C   & \text{if  $ 1\leq s,t \leq k ;$}\\
      b_{s,t} =0 & \text{if  $ 1\leq s \leq k, k +1\leq t \leq n ;$ } \\
    b_{s,t} =0  & \text{if   $ k +1\leq s \leq n , 1\leq t \leq k $}.
    \end{cases}$

   \medskip 
This implies that the matrix $B$ is of the form 
  $$ {B} =
 \begin{pmatrix}
 B_{1}  j & \\
 &  B_{2} \\ 
 \end{pmatrix},$$ 
where $ B_{1}  = [ w_{s,t}]_{1\leq s,t \leq k} \in  \GL(k,\C )$  and $ B_{2} \in  \GL(n-k,\H )$. 

This proves the lemma.
\end{proof}

\begin{lemma}\label{l1.5} 
  If ${A} \in \Sp(n)$ has an eigenvalue class $\lambda$,  $\lambda \neq \pm 1$, of odd multiplicity, then ${A}$ is not strongly reversible in $\Sp(n)$.
   \end{lemma}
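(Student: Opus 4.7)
The plan is to proceed by contradiction. Suppose $A$ is strongly reversible in $\Sp(n)$, so there exists $B \in \Sp(n)$ with $B^{2}=I_{n}$ and $BAB^{-1}=A^{-1}$; let $k$ denote the odd multiplicity of the eigenvalue class $[\lambda]$, with $\lambda \neq \pm 1$. First, I would normalize $A$ via \lemref{l1.1}, conjugating in $\Sp(n)$ so that the $k$-fold block for $[\lambda]$ occupies the top-left corner of the diagonalization. Applying the same conjugation to $B$ preserves the properties $B\in\Sp(n)$, $B^{2}=I_{n}$, and $BAB^{-1}=A^{-1}$ simultaneously, so by \lemref{l1.4} we may assume
\[
B \;=\; \begin{pmatrix} B_{1}\,j & 0 \\ 0 & B_{2} \end{pmatrix}, \qquad B_{1}\in \GL(k,\C),\ \ B_{2}\in \GL(n-k,\H).
\]

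Next, I would extract a contradiction from the involution condition on the top-left block. Using $jw=\bar{w}j$ for $w\in\C$ applied entrywise, we have $jB_{1}=\overline{B_{1}}\,j$, and hence
\[
(B_{1}j)^{2} \;=\; B_{1}\,\overline{B_{1}}\,j^{2} \;=\; -B_{1}\,\overline{B_{1}}.
\]
The equation $B^{2}=I_{n}$ therefore forces $B_{1}\,\overline{B_{1}} = -I_{k}$. Taking complex determinants of both sides yields
\[
|\det B_{1}|^{2} \;=\; \det(B_{1})\,\overline{\det(B_{1})} \;=\; \det(-I_{k}) \;=\; (-1)^{k},
\]
and since $k$ is odd, the right-hand side is $-1$, contradicting the non-negativity of $|\det B_{1}|^{2}$.

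The heart of the argument is the rigid block structure supplied by \lemref{l1.4}; once that is in place, a one-line determinant obstruction on an odd-dimensional complex block closes the proof. The only subtlety worth verifying is that the ``up to conjugacy'' reduction of \lemref{l1.4} is compatible with the extra constraints $B\in\Sp(n)$ and $B^{2}=I_{n}$, which is automatic since simultaneous conjugation by an element of $\Sp(n)$ preserves all three of these properties. I do not expect any genuine obstacle beyond this bookkeeping.
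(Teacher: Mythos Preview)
Your proof is correct and follows the same overall architecture as the paper: reduce via \lemref{l1.4} to the block form $B=\begin{pmatrix} B_{1}j & 0\\ 0 & B_{2}\end{pmatrix}$, then extract a determinant contradiction from the top-left $k\times k$ block when $k$ is odd. The endgame differs slightly. The paper uses \emph{both} the involution condition $B^{2}=I_{n}$ and the unitary condition $BB^{\bigstar}=I_{n}$ to deduce $B_{1}^{-1}=-\overline{B_{1}}=B_{1}^{\bigstar}$, hence $B_{1}^{\top}=-B_{1}$; skew-symmetry in odd dimension then forces $\det B_{1}=0$. You use only $B^{2}=I_{n}$, obtaining $B_{1}\overline{B_{1}}=-I_{k}$ and hence $|\det B_{1}|^{2}=(-1)^{k}<0$ directly. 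Your route is a touch more economical and in fact establishes the marginally stronger statement that no involution in $\GL(n,\H)$ (not merely in $\Sp(n)$) can conjugate $A$ to $A^{-1}$; the paper's skew-symmetry argument, by contrast, genuinely consumes the $\Sp(n)$ hypothesis. Your remark about the compatibility of the ``up to conjugacy'' reduction with the constraints on $B$ is well taken and is exactly the bookkeeping the paper leaves implicit.
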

 
\begin{proof} Assume that ${A}$ is strongly reversible in $\Sp(n)$. Then there exists $B$ in $\Sp(n)$ such that
${BA}{B}^{ -1} = { A}^{ -1}$, ${B}^2 =  {I_{n}}$, and ${B}{B}^{\bigstar} =  {I_{n}}$. Since $B \in \mathcal{R}({A})$, so by using \lemref{l1.4}, we can write $B$ in the following form: 
  $$ {B} =
 \begin{pmatrix}
 B_{1}  j& \\
 &  B_{2} \\ 
 \end{pmatrix},  $$ 
 where $B_{1}  \in  \GL(k,\C), B_{2} \in \GL(n-k,\H)$.

Now, we see that the relations 
 ${B}^{2} = {I_{n}}$ and ${B}{B}^{\bigstar} = {I_{n}}$ implies
 $$(B_{1}   j ) ( B_{1}  j) =  {I_{k}} \ \hbox{and} \ (B_{1}  j )  ( B_{1}  j) ^{\bigstar} =  {I_{k}}.$$
So by using $wj = j\bar{w}$ for all $w \in \C$, we get $(B_{1}) ( \overline{ B_{1}} ) = - {I_{k}}$ and $(B_{1}) ( B_{1}^{\bigstar}) = {I_{k}}$. In particular,  $B_{1}  \in   \GL(k,\C )$ such that $B_{1}  ^{-1} = - \overline{B_{1}} =  B_{1}^{\bigstar}$. 
This implies $B_{1}^{\top} = - B_{1}$. Hence $ \det(B_{1}^{\top})= \det(- B_{1}), \hbox{ i.e., } \det( B_{1}) = (-1)^{k} \det( B_{1}) = - \det( B_{1})$, where $k$ is odd. 

Therefore,  $\det( B_{1}) =0$. Hence $B_1$ is not invertible. This is a contradiction. So our initial assumption that $A$ is strongly reversible can not be true. 
This proves the lemma. \end{proof} 

\subsection{Proof of \thmref{t1.1}}
The proof follows from \lemref{l1.3} and \lemref{l1.5}.
\hfill $\square$

\bigskip 
\subsection{On reversibility in $\SU(n)$}  

First, we shall note the following lemma that will be used in the proof of Theorem \ref{nsrsu}.

\begin{lemma} \label{le2.3}  Suppose $A  \in \SU(2n)$ has a self-dual characteristic polynomial such that no eigenvalue of $A$ is equal to $1$ or $-1$. Let $B \in \GL(2n,  \C )$ such that  $BAB^{-1} = A^{-1}$ and $B^2 = I_{2n}$. 
Then $\det (B ) = (-1) ^n $. 
  \end{lemma}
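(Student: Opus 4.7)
The plan is to diagonalize $A$ and then exploit the intertwining relation $BAB^{-1}=A^{-1}$ to pin down a block structure on $B$ from which $\det B$ can be read off directly.

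First, since $A\in\SU(2n)$ the analogue of \lemref{l1.1} for $\SU$ (noted in the remark after its statement) gives a unitary $U$ with $UAU^{-1}$ diagonal. Self-duality of the characteristic polynomial, together with the hypothesis that neither $1$ nor $-1$ is an eigenvalue, means that the eigenvalues pair up as $e^{i\theta},e^{-i\theta}$ with equal multiplicities. After grouping and reordering these pairs we may assume, without loss of generality, that
$$A=\textnormal{diag}\bigl(e^{i\theta_1}I_{d_1},\,e^{-i\theta_1}I_{d_1},\,\ldots,\,e^{i\theta_m}I_{d_m},\,e^{-i\theta_m}I_{d_m}\bigr),$$
where $\theta_1,\ldots,\theta_m\in(0,\pi)$ are distinct and $d_1+\cdots+d_m=n$. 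The relabelling is legitimate because conjugating by the ambient unitary (and a permutation) sends $B$ to a matrix with the same determinant satisfying the same two relations.

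Second, write $B=[b_{st}]$. The equation $BA=A^{-1}B$ becomes $b_{st}(a_{tt}-a_{ss}^{-1})=0$, so $b_{st}=0$ unless the $s$th and $t$th diagonal entries of $A$ are mutual inverses. In our ordering this forces $B$ to decompose into $m$ independent $2d_k\times 2d_k$ blocks, the $k$th of which has the anti-diagonal form
$$\begin{pmatrix} 0 & C_k \\ D_k & 0 \end{pmatrix},\qquad C_k,D_k\in\GL(d_k,\C).$$
Applying $B^2=I_{2n}$ to each block gives $C_kD_k=I_{d_k}=D_kC_k$, whence $D_k=C_k^{-1}$ and in particular $\det(C_k)\det(D_k)=1$.

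Third, a standard row-swap computation yields
$$\det\begin{pmatrix} 0 & C_k \\ D_k & 0 \end{pmatrix}=(-1)^{d_k}\det(C_k)\det(D_k)=(-1)^{d_k},$$
so that
$$\det(B)=\prod_{k=1}^{m}(-1)^{d_k}=(-1)^{d_1+\cdots+d_m}=(-1)^{n},$$
as required. The only real bookkeeping is keeping the block labelling consistent when eigenvalues have higher multiplicities; once the diagonal form of $A$ is arranged as above, the rest is a direct consequence of the two hypotheses on $B$.
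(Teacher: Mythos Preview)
Your argument is correct and follows essentially the same line as the paper's proof: diagonalise $A$, use $BA=A^{-1}B$ entrywise to force $B$ into anti-diagonal $2d_k\times 2d_k$ blocks, apply $B^2=I$ to get $\det(C_k)\det(D_k)=1$, and read off $\det B=(-1)^n$. The paper packages the same computation as a strong induction on $n$, peeling off one eigenvalue pair at a time, whereas you carry out the full block decomposition in one pass; this is a cosmetic difference rather than a new idea, though your presentation is a little more direct.
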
 
\begin{proof} We will prove this lemma by using strong induction.
First consider $n = 1$. Then up to conjugacy in $\SU(2)$, we can write A as:

 $A  =\begin{pmatrix} e^ {i\theta} &  \\  & e^ {- i\theta}  \end{pmatrix}$, where $\theta \in (0,\pi )$. 
   Let $B = [b_{p,q}]_{1\leqslant p,q,\leqslant 2} \in  \GL(2,  \C )$ be such that  $BAB^{-1} = A^{-1}$ and $B^2 = I_{2}$. 
   Now, $BAB^{-1} = A^{-1}$ implies 
 $$  \begin{pmatrix}
e^{i\theta} b_{1,1}  & e^{-i\theta}  b_{1,2}   \\
e^{i\theta}b_{2,1}  &e^{-i\theta}  b_{2,2}    \\
 \end{pmatrix}\ =  \begin{pmatrix}
e^{-i\theta} b_{1,1}  & e^{-i\theta}  b_{1,2}   \\
e^{i\theta}b_{2,1}  & e^{i\theta} b_{2,2}  
 \end{pmatrix}.$$
Thus, $b_{1,1} = b_{2,2}  = 0  $ and $b_{1,2}$,  $b_{2,1} $ are non-zero complex numbers. Now, $B^2=I_2$ implies $ b_{1,2} b_{2,1} = 1$. So $\det(B) = - b_{1,2} b_{2,1} = -1 = (-1)^{1}$. Thus lemma holds for $n=1$. Next, assume that lemma holds for all $n\leqslant k$. Now by using induction hypothesis, we will show that lemma holds for $n=k+1$.

Consider $n=k+1$. Let $ e^{i\theta_1}$ is an eigenvalue of $A$ with multiplicity $r$, where $1 \leqslant r \leqslant n$ and $\theta_1 \in (0,\pi )$. 
 Now there are two possible cases:
 \begin{enumerate}

 \item  Suppose that $n=r$. Then upto conjugacy in $\SU(2n)$, we can write $A$ as: ${ A} = \textnormal{diag} (\underset{n\text{-times}}{\underbrace{e^{i\theta_1}, \dots, e^{i\theta_1}}},\underset{n\text{-times}}{\underbrace{e^ {-i\theta_1}, \dots,e^{-i\theta_1}}})$.
Let $B \in \GL(2n,  \C )$ be such that $BAB^{-1} = A^{-1}$ and $B^2 = I_{2n}$.  Then $ {B} =
 \begin{pmatrix}
 & P   \\
   Q & \\ \end{pmatrix}$, where $P,Q \in \GL(n,  \C )$ such that $PQ =I_n$.  Note that  ${B} =
 \begin{pmatrix}
 P  & \\
 &  Q \\ 
 \end{pmatrix} \begin{pmatrix}
 & I_n   \\
   I_n & \\ \end{pmatrix}$. This implies that $\det(B)=\det(P)\det(Q)(-1)^n$.  But $PQ =I_n$. So $ \det(PQ)= \det(P)\det(Q)= det(I_n)=1$. Thus we get $\det(B)= (-1)^n$.
  
 \item  Suppose that $r<n$. Then upto conjugacy in $\SU(2n)$, we can write $A$ as:  
$ {A} =
 \begin{pmatrix}
 A_{1}  & \\
 &  A_{2} \\ 
 \end{pmatrix}$ such that ${ A_1} = \textnormal{diag} (\underset{r\text{-times}}{\underbrace{e^{i\theta_1}, \dots, e^{i\theta_1}}},\underset{r\text{-times}}{\underbrace{e^ {-i\theta_1}, \dots,e^{-i\theta_1}}})\in \SU(2r)$ and ${ A_2} = \textnormal{diag} (e^{i\theta_{r+1}},  e^{-i\theta_{r+1}}, \dots, e^ {i\theta_{n}}, e^{-i\theta_{n}})\in \SU(2(n-r))$, where $\theta_1 \neq \theta_\ell$ and $ \theta_\ell  \in  (0 ,\pi)$, for all $ k = r+1,r+2 , \dots,n$. 
 
 Let $B \in \GL(2n,  \C )$ be such that  $BAB^{-1} = A^{-1}$ and $B^2 = I_{2n}$.  Then $ {B} =
 \begin{pmatrix}
 B_{1}  & \\
 &  B_{2} \\ 
 \end{pmatrix}$, where $B_1$ and $B_2$ satisfy following conditions:
 \begin{itemize}
 \item $B_1 \in \GL(2r,  \C )$ such that  $B_1 A_1 B_1^{-1} = A_1^{-1}$ and $B_1^2 = I_{2r}$. 
 \item $B_2 \in \GL(2(n-r),  \C )$ such that  $B_2 A_2 B_2^{-1} = A_2^{-1}$ and $B_2^2 = I_{2(n-r)}$. 
  \end{itemize}

 Since $r<n=k+1$ and $(n-r) < n=k+1$, so by induction hypothesis, $\det(B_1)= (-1)^r $ and $\det(B_2)= (-1)^{n-r}$. This implies $\det(B)= \det(B_1) \det(B_2)= (-1)^r(-1)^{n-r}= (-1)^{n}$. 
 \end{enumerate}
 Therefore, from both the cases we get lemma holds for $n=k+1$. Hence, by induction principle, we conclude that lemma holds for each $n\in \N$. 
\end{proof} 

Now we further note the following result from \cite{GP} that classifies strongly reversible elements in $\SU(n)$. 

\begin{lemma}\label{lem-strong-SU(n)} \cite[Proposition 3.3]{GP} Suppose $ A \in \SU(n)$ has a self-dual characteristic polynomial. Then $A$ is not strongly reversible in $\SU(n) $ if and only if  $ n \equiv 2 \mod 4$ and no eigenvalue of $A$ is equal to $1$ or $-1$. 
\end{lemma}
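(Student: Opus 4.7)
The plan is to diagonalize $A$ by an $\SU(n)$-conjugacy, write down an explicit reversing involution in $\U(n)$ whose determinant we can compute, and then either tune its determinant to $1$ to land in $\SU(n)$ or invoke \lemref{le2.3} to show no tuning is possible.

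First I would apply the $\SU(n)$-version of \lemref{l1.1} to put $A$ in diagonal normal form. Since $\chi_A$ is self-dual, the non-$\pm 1$ eigenvalues come in conjugate pairs $\{e^{i\theta_k}, e^{-i\theta_k}\}$ with equal multiplicity, so after reordering we may assume
\[
A \;=\; \bigoplus_{k=1}^{r} \begin{pmatrix} e^{i\theta_k} & 0 \\ 0 & e^{-i\theta_k} \end{pmatrix} \oplus (-I_s) \oplus I_t, \qquad \theta_k \in (0,\pi),
\]
with $2r + s + t = n$. The constraint $\det A = 1$ forces $(-1)^s = 1$, so $s$ is even. The condition ``$s = t = 0$'' is equivalent to $n = 2r$, and under this condition $n \equiv 2 \mod 4$ is equivalent to $r$ being odd.

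For the sufficiency direction I would introduce the standard reversing involution
\[
B_0 \;=\; \underbrace{J \oplus J \oplus \cdots \oplus J}_{r \text{ copies}} \oplus I_{s+t}, \qquad J = \begin{pmatrix} 0 & 1 \\ 1 & 0 \end{pmatrix},
\]
and verify directly that $B_0 \in \U(n)$, $B_0^2 = I_n$, $B_0 A B_0^{-1} = A^{-1}$, and $\det B_0 = (-1)^r$. If $r$ is even then $B_0 \in \SU(n)$ and $A$ is strongly reversible. If $r$ is odd but $s + t \geq 1$, I would replace the $I_{s+t}$ block of $B_0$ by an involution $C = C_s \oplus C_t \in \U(s+t)$ that commutes with $(-I_s) \oplus I_t$ and satisfies $\det C = -1$; such a $C$ exists by taking $\mathrm{diag}(-1,1,\dots,1)$ on whichever of the two blocks is non-trivial (note that $s$ even together with $s \geq 1$ forces $s \geq 2$, so the block is large enough). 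The resulting $B$ lies in $\SU(n)$, is an involution, and still reverses $A$, so $A$ is strongly reversible.

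Finally, for the necessity direction suppose $n \equiv 2 \mod 4$ and no eigenvalue of $A$ equals $\pm 1$, so $s = t = 0$ and $n = 2r$ with $r$ odd. If $A$ were strongly reversible in $\SU(n)$, the reversing involution $B$ would satisfy the hypotheses of \lemref{le2.3} applied to $A \in \SU(2r)$, yielding $\det B = (-1)^r = -1$ and contradicting $B \in \SU(n)$. I expect the main obstacle to be precisely this necessity step: the sufficiency arguments are concrete and constructive, whereas ruling out strong reversibility requires a determinant obstruction that applies to \emph{every} reversing involution in $\GL(n,\C)$, not merely the specific $B_0$, and this universal statement is exactly the content packaged by \lemref{le2.3}.
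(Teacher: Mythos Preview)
Your argument is correct and aligns with the paper's own treatment: the paper likewise derives the ``if'' direction (non-strong-reversibility when $n\equiv 2 \bmod 4$ and no eigenvalue is $\pm 1$) directly from \lemref{le2.3}, exactly as you do. For the converse the paper simply cites \cite[p.~74]{FS}, whereas you supply the explicit reversing involution $B_0$ and its determinant correction; this is an elaboration rather than a different route, and your case split ($r$ even; $r$ odd with $s+t\geq 1$) cleanly exhausts the complement of the exceptional case.
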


Note that if part of above lemma follows directly from the Lemma \ref{le2.3}. For the converse part see \cite[p. 74]{FS}.

 \section{Conjugacy in $\U(n, \F) \ltimes \F^n$} \label{lemmas}
 
Let $G$ denote one of the groups $\Sp(n) \ltimes \H^n$,  $\U(n) \ltimes \C^n$, or $\SU(n) \ltimes \C^n$. 
\begin{lemma} \label{l2.1} 
Every element $g$ in $G$,  upto conjugacy, can be written as $g = ({A},v)$ such that ${A}(v) =v$ and  ${ A} = \textnormal{diag} (e^{i\theta_1} , e^{i\theta_2},\dots,e^{i\theta_r}, - I_s , I_t)$,  where $r,s,t \in  \N \cup \{0\}$,$\theta_k  \in  (0 ,\pi)$,  $ k = 1,2 , \dots, r$ and $v$ is of the form $v = [0,0, \dots, 0,v_{1}, v_{2},\dots,v_{t} ]$.  Further, If $1$ is not an eigenvalue of the linear part of $g$, then upto conjugacy $g$ is of the form $g= (A,\mathbf{0})$.
\end{lemma}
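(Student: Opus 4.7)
The plan is to normalize $g = (A, v)$ in two stages. First, I would conjugate by an element of the form $(P,\mathbf{0})$ to put the linear part $A$ into the desired diagonal shape; then I would conjugate by pure translations $(I_n, w)$ to clean up the vector $v$. By \lemref{l1.1} for $\Sp(n)$ and the analogous diagonalization for $\U(n)$ and $\SU(n)$ (Curtis, Corollary 32.9), there exists $P \in L(G)$ such that $PAP^{-1}$ is diagonal with entries of the form $e^{i\theta}$, $\theta \in [0,\pi]$, and after reordering the basis we may arrange
\[
PAP^{-1} = \textnormal{diag}(e^{i\theta_1}, \dots, e^{i\theta_r}, -I_s, I_t),\quad \theta_k \in (0,\pi).
\]
In the $\SU(n)$ case, any diagonalizer $P \in \U(n)$ can be rescaled by a unit scalar $\alpha$ with $\alpha^n\det(P) = 1$, and $\alpha P \in \SU(n)$ enacts the same conjugation. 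Using the semidirect product rule $(A_1,v_1)(A_2,v_2) = (A_1 A_2,\, v_1 + A_1 v_2)$, one checks $(P,\mathbf{0})(A,v)(P,\mathbf{0})^{-1} = (PAP^{-1},\, Pv)$, so we may now assume $A$ itself is in the displayed diagonal form.

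For the second stage, a direct semidirect product computation gives
\[
(I_n,w)(A,v)(I_n,w)^{-1} \;=\; \bigl(A,\, v + (I_n - A)w\bigr),
\]
so $v$ can be modified by any vector in the image of $I_n - A$. Decompose $v = u + v^*$, where $v^* \in \ker(I_n - A)$ is supported in the last $t$ coordinates and $u$ lies in the sum of the remaining eigenspaces. The restriction of $I_n - A$ to that complementary subspace is invertible, because its eigenvalues $1 - e^{i\theta_k}$ and $2$ are all nonzero; choosing $w$ to be the preimage of $-u$ under this restriction replaces $v$ by $v^*$, yielding the normal form $[0,\dots,0,v_1,\dots,v_t]$. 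The identity $Av = v$ is then automatic, since $v$ now lies in $\ker(I_n - A)$. Finally, if $1$ is not an eigenvalue of $A$, then $t = 0$ and $I_n - A$ is invertible on all of $\F^n$, so the same argument kills $v$ entirely and yields $g = (A, \mathbf{0})$.

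The main thing to verify, which is routine but worth checking case by case, is that the conjugators stay inside $G$. Pure translations $(I_n,w)$ always belong to $G$; the diagonalizer sits in $L(G)$ by the cited diagonalization results, with the unit rescaling noted above being the only adjustment needed for $\SU(n)$. Beyond this bookkeeping there is no serious obstacle, as the argument rests only on the spectral theorem for $L(G)$ together with the explicit conjugation identity $(I_n,w)(A,v)(I_n,w)^{-1} = (A, v + (I_n-A)w)$ and the observation that $I_n - A$ is invertible on the orthogonal complement of its $1$-eigenspace.
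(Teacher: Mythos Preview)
Your proposal is correct and follows essentially the same two-step strategy as the paper: first conjugate by $(P,\mathbf{0})$ with $P\in L(G)$ to diagonalize the linear part, then conjugate by a pure translation $(I_n,w)$ using the identity $(I_n,w)(A,v)(I_n,w)^{-1}=(A,\,v+(I_n-A)w)$ and the invertibility of $I_n-A$ off the $1$-eigenspace to kill the non-fixed components of $v$. Your explicit justification that the diagonalizer can be taken in $\SU(n)$ via a unit-scalar rescaling is a detail the paper leaves implicit, but otherwise the arguments coincide.
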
 
\begin{proof} 
\medskip Let $g \in G$ be an arbitrary element.  After conjugating $g$ by a suitable element $(B,\mathbf{0})$ in $G$, we can assume that 
$g = (A ,w)$ such that $A= \textnormal{diag} (e^ {i\theta_1}, e^ {i\theta_2},\dots, e^ {i\theta_r}, -I_s, I_t)$, where $\theta_k  \in  (0 ,\pi)$,  $k = 1,2 , \dots, r$ and $r+s+t = n$. Now there are two possible cases:
\begin{enumerate}

\item Suppose $1$ is not an eigenvalue of $A$. 
   So the linear transformation ${A}-  {I_n}$ is invertible. Therefore, we can choose $x_o =({A } -  {I_n}  )^{-1} (w)  \in \F^n$. 
   
   Consider $h = ({I_n}, x_o) \in G$, i.e., $h(x) = x + x_o $ for all $x \in \F^n$. Note that for all $x \in \F^n$,
 $$ hgh^{-1} (x) = hg(x- x_o) = h({A}x - {A}x_o + w ) ={A}x + w - ({A}-I_n)x_o.$$
 This implies $hgh^{-1} (x)= {A}(x) + \mathbf{0} $ for all $x \in \F^n$ since $x_o =({A} -  {I_n} )^{-1} (w)$. By taking  $v = \mathbf{0}$, we have $hgh^{-1} =({A} , \mathbf{0})\hbox{ such that } {A}(\mathbf{0}) =  \mathbf{0}$. 
  
 \item Let $1$ be an eigenvalue  of ${A}$. In this case $t >0$ and 
${A} - {I_n}$ has rank $r+s = n-t  < n$. So we can choose an element $u \in \F^n$ having the last $n -(r+s)$ coordinates zero such that $ [({A} - {I_n}  )(u)]_\ell = w_\ell \hbox{ for all }  \ 1  \leq \ell \leq r+s, \hbox{where } w = [w_\ell]_{1  \le \ell \le n}$. Let $v = w{ - }({A} {-}  {I_n} )(u)$. 
  Then $v = [0,0, \dots, 0,w_{r+s+1}, w_{r+s+2},\dots,w_n ] \hbox{ and} \ {A}(v) =v $.
  
  Consider $h = (  {I_n},u) \in G, \hbox{ i.e., }h(x) = x+u$ for all $x \in \F^n$. Note that for all $x \in \F^n$, 
$$hgh^{-1}(x) = hg(x-u) = h({A}x - {A}u +w) = {A}x + w-({A} - {I_n})(u) = {A}x +v.$$ 
\end{enumerate}
This proves the lemma.
\end{proof}

In the following lemma, we get a sufficient condition for reversibility of $g \in G$.
\begin{lemma}\label{l2.2}  Let $g = ({A},v)$ in $G$ be such that ${A}(v) = v$. If there exists an element  ${B}$  in $L(G)$ such that  ${BA}{B}^{-1} ={ A}^{-1} $ and ${B}(v)  = - v$, then $g$ is {reversible} in $G$.\end{lemma}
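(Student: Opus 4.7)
The plan is to exhibit an explicit conjugator $h\in G$. The natural candidate, given the hypotheses on $B$, is the purely linear element $h=(B,\mathbf{0})$, which lies in $G$ because $B\in L(G)$. The entire proof should then reduce to one direct calculation in the semidirect product.

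First I would record the form of the inverse. Using the rule $(C,u)^{-1}=(C^{-1},-C^{-1}u)$ in $G$, we have $g^{-1}=(A^{-1},-A^{-1}v)$. The hypothesis $A(v)=v$ gives $A^{-1}v=v$, so
\[
g^{-1}=(A^{-1},-v).
\]
Next I would compute $hgh^{-1}$ using the semidirect-product multiplication $(C_1,u_1)(C_2,u_2)=(C_1C_2,\,C_1u_2+u_1)$. With $h=(B,\mathbf{0})$ and $h^{-1}=(B^{-1},\mathbf{0})$, a two-line computation gives
\[
hgh^{-1}=(B,\mathbf{0})(A,v)(B^{-1},\mathbf{0})=(BAB^{-1},\,Bv).
\]
Finally I would invoke the two standing assumptions $BAB^{-1}=A^{-1}$ and $Bv=-v$ to conclude $hgh^{-1}=(A^{-1},-v)=g^{-1}$, establishing reversibility.

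There is no serious obstacle here; the only thing to be careful about is the sign bookkeeping in the affine inverse and the verification that $A(v)=v$ is exactly what converts $-A^{-1}v$ into $-v$, so that the translational component of $hgh^{-1}$ matches that of $g^{-1}$ on the nose (with no further translation required in $h$). This is why one can take the translation part of $h$ to be $\mathbf{0}$, and this is the conceptual content of the lemma: when $v$ is fixed by $A$, reversing $g$ reduces entirely to reversing the linear part in a way that also flips $v$.
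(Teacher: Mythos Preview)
Your proof is correct and is essentially the same as the paper's: both take $h=(B,\mathbf{0})$ and verify $hgh^{-1}=g^{-1}$, with your version simply spelling out the semidirect-product computation and the use of $A(v)=v$ that the paper leaves implicit.
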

\begin{proof} 
Consider $ h = ({B }, \mathbf{0}) \in G$, where $\mathbf{0}$ is the zero element in $\F^n$. Then $hgh^{-1} = g^{-1} $. 
This proves the lemma. \end{proof} 

In the following lemma, we get a sufficient condition for strong reversibility of $g \in G$.
\begin{lemma} \label{lem-str-real} 
 Let $g = ({A},v)$ in $G$ be such that ${A}(v) = v$. If there exists an element  ${B}$ in $L(G)$ such that ${BA}{B}^{-1} ={ A}^{-1} $,  ${B}(v)  = - v$, and ${ B}^{2} =  I_{n}$, then $g$ is {strongly reversible} in $G$.  
\end{lemma}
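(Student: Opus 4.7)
The approach is entirely constructive: I would simply exhibit the involution in $G$ that witnesses strong reversibility. The natural candidate, given the hypotheses on $B$, is $h = (B, \mathbf{0}) \in G$, and the proof reduces to a short verification using the semidirect product multiplication $(A_1, v_1)(A_2, v_2) = (A_1 A_2, A_1 v_2 + v_1)$.

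The steps I would carry out are the following. First, note that since $B \in L(G)$, the element $h = (B, \mathbf{0})$ lies in $G$, and from $B^2 = I_n$ together with the multiplication rule we get $h^2 = (B^2, B\mathbf{0} + \mathbf{0}) = (I_n, \mathbf{0}) = I$, so $h$ is an involution and in particular $h^{-1} = h$. Second, I would compute $g^{-1}$ explicitly: the hypothesis $A(v) = v$ gives $A^{-1}(v) = v$, so one checks directly that $(A^{-1}, -v)(A, v) = (I_n, A^{-1}v - v) = (I_n, \mathbf{0})$, hence $g^{-1} = (A^{-1}, -v)$. Third, using the multiplication rule twice,
\begin{equation*}
hgh^{-1} = (B, \mathbf{0})(A, v)(B, \mathbf{0}) = (BA, Bv)(B, \mathbf{0}) = (BAB, Bv).
\end{equation*}
Finally, since $B^2 = I_n$ we have $BAB = BAB^{-1} = A^{-1}$ by the reversing hypothesis, and $Bv = -v$ by assumption, so $hgh^{-1} = (A^{-1}, -v) = g^{-1}$, which combined with $h^2 = I$ establishes strong reversibility.

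There is no genuine obstacle here: the lemma is essentially an observation about how an involutive conjugator on the linear part, when additionally reversing $v$, lifts to an involutive conjugator in the semidirect product. The only things worth being careful about are the order of composition in the semidirect product and the identity $A^{-1}v = v$, both of which are routine.
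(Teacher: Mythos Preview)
Your proof is correct and follows exactly the same approach as the paper: exhibit $h = (B, \mathbf{0})$ and verify it is an involution conjugating $g$ to $g^{-1}$. The paper's proof is terser, simply asserting that $h$ is an involution with $hgh^{-1} = g^{-1}$, while you spell out the semidirect product computations in full; the underlying argument is identical.
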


\begin{proof} Consider $ h = ({B }, \mathbf{0}) \in G$, where $\mathbf{0}$ is the zero element in $\F^n$.
Then $h$ is an involution in $G$ such that $hgh^{-1} = g^{-1} $. This proves the lemma. 
\end{proof}

In the following lemma, we get a necessary condition on the linear part of $g \in G$ for reversibility and strongly reversibility of $g$.

\begin{lemma}\label{l2.3} The following holds.
\begin{enumerate}
\item If $g = ({A} , v )$ is reversible in $G$, then ${ A }$ is reversible in $L(G)$. 
\item If $g = ({A} , v )$ is strongly reversible in $G$, then ${ A }$ is strongly reversible in $L(G)$. 
\end{enumerate}
\end{lemma}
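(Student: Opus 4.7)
The plan is to observe that the natural projection $\pi\colon G \to L(G)$ sending $(A,v) \mapsto A$ is a group homomorphism, because for the semidirect product $L(G) \ltimes \F^n$ the product rule $(A,v)(A',v') = (AA', Av' + v)$ restricted to the first coordinate is simply $AA'$. This homomorphism then automatically transports conjugacy relations from $G$ to conjugacy relations in $L(G)$, which gives part~(1) immediately. The only point requiring attention is part~(2), where I need to check that $\pi$ also sends involutions to involutions (or to the identity), so that strong reversibility descends.

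More concretely, I would first take $h = (B,w) \in G$ witnessing $hgh^{-1} = g^{-1}$, where $g = (A,v)$. Applying $\pi$ to both sides of this equation gives $B A B^{-1} = A^{-1}$ in $L(G)$, proving $A$ is reversible in $L(G)$. This settles (1).

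For (2), suppose additionally that $h$ is an involution in $G$, i.e.\ $h^{2} = I = (I_{n},\mathbf{0})$. A direct computation yields
\[
(B,w)^{2} = (B^{2},\,Bw + w),
\]
and equating this with $(I_{n},\mathbf{0})$ forces $B^{2} = I_{n}$. Hence $B$ is an involution in $L(G)$ and, combined with $BAB^{-1} = A^{-1}$ from the previous paragraph, this shows $A$ is strongly reversible in $L(G)$.

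There is no serious obstacle here; the whole content is that the semidirect product projection is a homomorphism and preserves the order-at-most-two property in the first coordinate. The only thing to be careful about is to write $(B,w)^{-1} = (B^{-1}, -B^{-1}w)$ correctly when expanding the conjugation identity, but once the first coordinates are compared, the result is automatic.
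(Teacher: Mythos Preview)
Your argument is correct and is essentially the same as the paper's: both use the projection homomorphism $G \to L(G)$, $(A,v)\mapsto A$, to transport the conjugacy relation, and then note that a homomorphism sends involutions to involutions. Your version simply makes the computation $(B,w)^2=(B^2,Bw+w)$ explicit, whereas the paper states the preservation of involutions as a one-line consequence of $\phi$ being a homomorphism.
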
 
\begin{proof} 
 Consider a group homomorphism $\phi : G \to L(G) $, which sends each $g \in G$ to its linear part, i.e., $\phi(A, v) = A$.  Let $g$ be reversible in $G$. Then there exists $h$ in $G$ such that $hgh^{-1} = g^{-1}$. This implies $\phi(h)\phi(g)(\phi(h))^{-1} =( \phi(g))^{-1}$. Also, if $h$ is an involution in $G$, then $\phi(h)$ will be an involution in $L(G)$. The lemma now follows from the above observations.
\end{proof} 

\section{Reversibility in $\Sp(n) \ltimes \H^n$}\label{Reasp}
\subsection{Proof of \thmref{revsp} }
Let  $g \in  \Sp(n)  \ltimes \H^n$.  By using \lemref{l2.1}, up to conjugacy, we can write $g= (A,v)$ such that ${A}(v) =v$, where $A = \textnormal{diag} (e^{i\theta_1} , e^{i\theta_2},\dots,e^{i\theta_r}, - I_s , I_t)$,  $\theta_k  \in  (0 ,\pi)$, $r,s,t \in  \N \cup \{0\}$, $r+s+t = n$, and $v = [0,0, \dots, 0,v_{1}, v_{2},\dots, v_{t} ] \in  \H^{n}$. 

If  $t =0$, i.e., $1$ is not an eigenvalue of $A$, then up to conjugacy, $g = ({A}, \mathbf{0})$, where $A \in \Sp(n)$. But every element in $\Sp(n)$ is reversible, see {\cite[Proposition 3.1]{BG}}. So $A$ is reversible. Hence $g$ is reversible in $ \Sp(n) \ltimes \H^n$.

If $t > 0$, consider ${B} = \textnormal{diag} ( j,j,\dots,j,- I_{s+t}) \in \Sp(n)$. 
 Then ${BA}{B}^{-1} ={ A}^{-1}$, ${B}(v)  = - v$. Hence proof follows from \lemref{l2.2}.   \hfill $\square$ 

\begin{lemma}\label{lt5}
Suppose that $g = (A,w )$ be an element of $\Sp(n) \ltimes \H^n$.  Let every eigenvalue class of $A$ is either $ \pm 1$ or of even multiplicity. Then $g$ is strongly reversible in $\Sp(n)  \ltimes \H^n$.\end{lemma}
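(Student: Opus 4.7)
The plan is to reduce $g$ to the canonical form furnished by \lemref{l2.1} and then exhibit an explicit involution in $\Sp(n)$ satisfying the hypotheses of \lemref{lem-str-real}, by adapting the conjugating involution built in the proof of \lemref{l1.3} so that it also negates the translation vector.

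First, using \lemref{l2.1}, I would replace $g$ by a conjugate of the form $g = (A,v)$ with $A(v) = v$, where
\[
A = \textnormal{diag}(e^{i\theta_1}, \dots, e^{i\theta_r}, -I_s, I_t), \qquad \theta_k \in (0,\pi),
\]
and $v = [0,\dots,0,v_1,\dots,v_t]$. The hypothesis that every non-$\pm1$ eigenvalue class of $A$ has even multiplicity lets me, after a harmless permutation and regrouping of the $\theta_k$, assume further that $A$ decomposes as a direct sum of blocks $e^{i\theta_m} I_{2k_m}$ for distinct $\theta_m \in (0,\pi)$, together with the summands $-I_s$ and $I_t$.

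Next, I would construct the required involution. Recall from the proof of \lemref{l1.3} that $L = \begin{pmatrix} 0 & j \\ -j & 0 \end{pmatrix} \in \Sp(2)$ is an involution satisfying $L(e^{i\theta} I_2) L^{-1} = e^{-i\theta} I_2$ for every $\theta$. Define
\[
B \;=\; \underbrace{L \oplus \dots \oplus L}_{k_1} \;\oplus\; \dots \;\oplus\; \underbrace{L \oplus \dots \oplus L}_{k_r} \;\oplus\; I_s \;\oplus\; (-I_t),
\]
so that $B$ differs from the conjugating involution in \lemref{l1.3} only in its final block, where I have replaced $I_{s+t}$ by $I_s \oplus (-I_t)$. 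The key observation is that on the $1$-eigenspace of $A$, where $v$ is supported, the conjugation constraint $BAB^{-1} = A^{-1}$ is vacuous because $A$ acts there as $I_t$; this freedom is precisely what allows me to pick $-I_t$ on that block in order to send $v$ to $-v$.

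Finally, I would check the four conditions in one pass: $B \in \Sp(n)$ and $B^2 = I_n$ hold block-wise; $BAB^{-1} = A^{-1}$ holds on each non-$\pm 1$ block by the conjugation property of $L$ and on the $\pm I$ blocks because those blocks are self-inverse; and $B(v) = -v$ follows from the $-I_t$ block together with the vanishing of $v$ in the remaining coordinates. With these facts verified, \lemref{lem-str-real} immediately yields that $g$ is strongly reversible in $\Sp(n) \ltimes \H^n$. The only nontrivial observation — once the machinery of \lemref{l2.1} and \lemref{lem-str-real} is in place — is noticing that the conjugation constraint vanishes on the $1$-eigenspace, so the translation part can be absorbed into a purely cosmetic sign change on the last block; the rest is a routine block-matrix verification.
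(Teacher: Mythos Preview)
Your proposal is correct and follows essentially the same approach as the paper: reduce via \lemref{l2.1}, assume $A$ is in the block form of Equation~\eqref{elem-Sp(n)}, and then modify the conjugating involution from \lemref{l1.3} by replacing its $I_{s+t}$ tail with $I_s \oplus (-I_t)$ so that $B(v) = -v$, after which \lemref{lem-str-real} applies. The paper's proof is exactly this construction, stated more tersely.
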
 
\begin{proof} Up to conjugacy in $\Sp(n)\ltimes \H^n$, we can write $g$ as $g = (A, v )$ as in the \lemref{l2.1}. Further, we can assume that $A$ has the form as given in the Equation \eqref{elem-Sp(n)}.  Now, consider
\begin{equation} {B} =  \begin{pmatrix} \diagentry{L} \\
&\diagentry{L}\\
&&\diagentry{\ddots}\\
&&&\diagentry{L}\\
&&&&\diagentry{I_{s}}\\
&&&&&\diagentry{-I_{t}}  \end{pmatrix}\end{equation} 
in $\Sp(n)$, where $L = \begin{pmatrix}
0 & j\\
 -j & 0 \\ 
 \end{pmatrix}$. 
Then ${BAB}^{-1} ={ A}^{-1}$,  ${B}(v)= - v$, and ${B}^{2} =I_n$. Hence proof follows from Lemma \ref{lem-str-real} . \end{proof}

\subsection{Proof of \thmref{srasp} } 
\medskip  The proof of the theorem follows from \thmref{t1.1}, \lemref{l2.3}, and Lemma \ref{lt5}.  \hfill $\square$

\section{Reversibility in $\U(n) \ltimes \C^n$} \label{Reasu}

\subsection{Proof of \thmref{srau}}
The implication $(2) \Rightarrow (1)$ is always true since every strongly reversible element in a group is always reversible. We shall prove the rest. 

\medskip 
$(4) \Rightarrow (2)$: Suppose $A$ has a self-dual characteristic polynomial. 

Therefore, by using \lemref{l2.1}, we can assume that $g = ({A},v)$ such that  ${A}(v) =v$ and 
 ${ A} = \textnormal{diag} (e^{i\theta_1},  e^{-i\theta_1}, \dots, e^ {i\theta_r}, e^{-i\theta_r}, - I_s , I_t)$, $\theta_k  \in  (0 ,\pi)$, $ k = 1,2 , \dots,r$ where $r,s, t \in \N \ \cup  \ \{0\} $ and $v$ is of the form $v = [0,0, \dots , 0,v_{1}, v_{2},\dots, v_{t} ]$. Let $K=\begin{pmatrix} 0 & 1 \\ 1 & 0 \end{pmatrix}$. Consider 
\begin{equation} \label{c}  {B} =  \begin{pmatrix} \diagentry{K} \\
&\diagentry{K}\\
&&\diagentry{\ddots}\\
&&&\diagentry{K}\\
&&&&\diagentry{I_{s}}\\
&&&&&\diagentry{-I_{t}}  \end{pmatrix}\end{equation} 
in $\U(n)$. Then ${BAB}^{-1} ={ A}^{-1}$,  ${B}(v)= - v$, and ${B}^{2} =I_n$. Hence proof follows from Lemma \ref{lem-str-real}. 

$(2) \Rightarrow (3)$: Follows from  \lemref{l2.3}.

$(3) \Leftrightarrow (4)$: This equivalence follows from \cite[Proposition 3.1]{GP}. 

$(1) \Rightarrow (4)$: Follows from \lemref{l2.3} and noting the fact that an element $A$ in $\U(n)$ is reversible if and only if it has a self-dual characteristic polynomial,  see {\cite[Corollary 3.2]{GP}}, {\cite[Theorem 8]{Ellers}}.  \hfill  $\square$
\section{Reversibility in $\SU(n) \ltimes \C^n$} \label{Reasu1}
\subsection{Proof of \thmref{rsu}}
$(1) \Rightarrow (2)$: Follows from the \lemref{l2.3}.

$(2) \Leftrightarrow (3)$: This equivalence follows from {\cite[Theorem 4.22]{FS}}, {\cite[Proposition 3.3]{GP}}.

$(3) \Rightarrow (1)$: 
Suppose $A$ has a self-dual characteristic polynomial. 

Therefore, by using \lemref{l2.1}, we can assume that $g = ({A},v)$ such that  ${A}(v) =v$ and 
 ${ A} = \textnormal{diag} (e^{i\theta_1},  e^{-i\theta_1}, \dots, e^ {i\theta_r}, e^{-i\theta_r}, - I_s , I_t)$, $\theta_k  \in  (0 ,\pi)$, $ k = 1,2 , \dots,r$ where $r,s, t \in \N \ \cup \ \{0\}, $ and $v$ is of the form $v = [0,0, \dots , 0,v_{1}, v_{2},\dots, v_{t} ]$. Further, note that if $A \neq I_n$, then $(r,s)  \neq (0,0)$.
 
 Suppose $A \neq I_n$. Consider $B$ as given in the Equation \eqref{c}.  Note that $\det(K)=-1$ and $\det( B)=(-1)^{r+t} $. If $\det (B)=1$, then choose this $B$. If not then  we choose $B \in \SU(n)$ in the following way. 
If $s \neq 0$, we choose $B \in \SU(n)$ as 
\begin{equation} {B} =  \begin{pmatrix} \diagentry{K} \\
&\diagentry{K}\\
&&\diagentry{\ddots}\\
&&&\diagentry{K}\\
&&&&\diagentry{-1}\\
&&&&&\diagentry{I_{s-1}}\\
&&&&&&\diagentry{-I_{t}}  \end{pmatrix}\end{equation} 
Note that this $B$ is also an involution in $ \SU(n)$.  When $r \neq 0$, the element $B$ can also be chosen as 
\begin{equation} \label{d} {B} =  \begin{pmatrix} \diagentry{J} \\
&\diagentry{K}\\
&&\diagentry{\ddots}\\
&&&\diagentry{K}\\
&&&&\diagentry{I_{s}}\\
&&&&&\diagentry{-I_{t}}  \end{pmatrix}\end{equation} 
where $J=\begin{pmatrix} 0 & -1\\1& 0 \end{pmatrix}$. Then $\det (B)=1$, though note that $B$ is no more an involution. 

 Thus, we have constructed $B$ in $\SU(n)$ such that  ${BAB}^{-1} ={ A}^{-1}$,  ${B}(v)= - v$. The theorem now follows from Lemma \ref{l2.2} and \corref{cor-id-SU(n)} proven below. 
\hfill $\square$ 

\medskip The following lemma follows by combining \corref{srsu-0}, Lemma \ref{lem-strong-SU(n)} and \lemref{l2.1}.  
  \begin{lemma}\label{srsu-1}
Let $g=(A, w)$ in $\SU(n) \ltimes \C^n$. Assume that either one of the following holds: 

\begin{itemize} 
\item[(i)] $A$ has an eigenvalue $-1$.

\item[(ii)] $n \equiv 0 \mod 4$ and $A$ does not have any eigenvalue equal to $1$ or $-1$. 
\end{itemize}
Then the following are equivalent. 
\begin{enumerate} 
\item $g$ is strongly reversible in $\SU(n) \ltimes \C^n$.  
\item $A$ is strongly reversible in $\SU(n)$. 
\item The characteristic polynomial of $A$ is self-dual.   \end{enumerate} 
\end{lemma}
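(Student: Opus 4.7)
The plan is to deduce each of the three equivalences by reducing to results already established, handling hypotheses (i) and (ii) in parallel. The implication $(1) \Rightarrow (2)$ is free from \lemref{l2.3}(2), so the real content lies in the equivalence $(2) \Leftrightarrow (3)$ and the implication $(3) \Rightarrow (1)$.

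For $(2) \Leftrightarrow (3)$, I would cite \lemref{lem-strong-SU(n)}: the only obstruction to strong reversibility in $\SU(n)$ of a matrix with self-dual characteristic polynomial is the simultaneous condition $n \equiv 2 \pmod 4$ together with the absence of $\pm 1$ as eigenvalues. Hypothesis (i) supplies an eigenvalue $-1$, and hypothesis (ii) forces $n \equiv 0 \pmod 4$; in each case the exceptional condition fails, so $A$ is strongly reversible in $\SU(n)$ precisely when its characteristic polynomial is self-dual.

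For $(3) \Rightarrow (1)$, under hypothesis (i) I would simply invoke \corref{srsu-0}, which gives this implication directly. Under hypothesis (ii), the absence of $1$ as an eigenvalue makes $A - I_n$ invertible, so \lemref{l2.1} permits me to assume, up to conjugacy, that $g = (A, \mathbf{0})$. Having already secured $(3) \Rightarrow (2)$, I obtain an involution $B \in \SU(n)$ with $BAB^{-1} = A^{-1}$, and then $h = (B, \mathbf{0})$ is an involution in $\SU(n) \ltimes \C^n$ that conjugates $(A, \mathbf{0})$ to its inverse, giving strong reversibility of $g$.

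I do not expect any substantial obstacle: the lemma is essentially a bookkeeping synthesis of \corref{srsu-0}, \lemref{lem-strong-SU(n)}, and the normal-form \lemref{l2.1}. The only point that requires care is verifying that both hypotheses (i) and (ii) simultaneously exclude the $n \equiv 2 \pmod 4$ exception appearing in \lemref{lem-strong-SU(n)}, and that in the second case the reduction to $(A,\mathbf{0})$ via \lemref{l2.1} is legitimate because the hypothesis rules out $1$ as an eigenvalue of $A$.
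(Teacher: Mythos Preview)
Your proposal is correct and matches the paper's own proof, which is given in a single line: ``The following lemma follows by combining \corref{srsu-0}, Lemma \ref{lem-strong-SU(n)} and \lemref{l2.1}.'' You have simply spelled out how the combination works, additionally citing \lemref{l2.3} for the implication $(1)\Rightarrow(2)$, which is an entirely appropriate elaboration.
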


\medskip Note that if $A$ has no eigenvalue $-1$, then $g$ may not be strongly reversible even if $A$ is so. The following example demonstrates this. 
\begin{example} \label{ex1} 
 Consider $g = (A ,v ) \in \SU(5)\ltimes \C^5$ such that  $A = \textnormal{diag} (e^{i\theta},e^{-i\theta},e^{i\phi},e^{-i\phi},1 )$ and $v = (0,0,0,0,1)$, where $ \theta, \phi \in (0,\pi)$ such that $ \theta \neq \phi$. Note that $A$ is strongly reversible in $\SU(5)$. 
 
 Assume that $g$ is strongly reversible in $ \SU(5)\ltimes \C^5$. Then there exists $ h = (B ,u )$ in $ \SU(5)\ltimes \C^5$ such that 
$ hgh^{-1} =g^{-1} $ and $h^2 = I$, where  ${I}$ is the identity in $\SU(5)\ltimes \C^5.$ Using $hgh^{-1} =g^{-1} $ and $ h^2 ={I}$, we have $B \in \SU(5)$ such that for all  $x \in \C^5$, 
\begin{equation}\label{eqn00} BAB^{-1}(x) -  BAB^{-1} (u) + B(v) + u = A^{-1}(x) - A^{-1}(v), \end{equation}
\begin{equation} \label{eqn01} 
B^2(x) + B(u) + u = I_{5}(x). \end{equation} 
This implies
\begin{equation}  \label{eqn02}     BAB^{-1} =A^{-1},  ( A^{-1} - I_5)(v-u) = - (B + I_5)(v), \end{equation} 
 \begin{equation}  \label{eqn03} \  B ^2 = I_5 , (B + I_5 )(u) =0. \end{equation} 

 From the equation $ BAB^{-1} =A^{-1}$, we see that,
$$ \begin{pmatrix}
e^{i\theta} b_{1,1}  & e^{-i\theta} b_{1,2}  & e^{i\phi}b_{1,3}  & e^{-i\phi}b_{1,4} & b_{1,5} \\ 
e^{i\theta} b_{2,1}  &e^{-i\theta}  b_{2,2}  & e^{i\phi}b_{2,3}  & e^{-i\phi} b_{2,4} & b_{2,5} \\
e^{i\theta}  b_{3,1}  & e^{-i\theta}  b_{3,2}  & e^{i\phi}b_{3,3}  & e^{-i\phi}b_{3,4} & b_{3,5}  \\
e^{i\theta}  b_{4,1}  & e^{-i\theta} b_{4,2}  & e^{i\phi} b_{4,3}  & e^{-i\phi} b_{4,4} & b_{4,5} \\
e^{i\theta}  b_{5,1}  & e^{-i\theta} b_{5,2}  & e^{i\phi}b_{5,3}  & e^{-i\phi}b_{5,4} & b_{5,5} \\
 \end{pmatrix} $$
 
 $$=  \begin{pmatrix}
e^{-i\theta} b_{1,1}  & e^{-i\theta} b_{1,2}  & e^{-i\theta} b_{1,3}  & e^{-i\theta} b_{1,4} &  e^{-i\theta} b_{1,5}\\
e^{i\theta}b_{2,1}  &e^{i\theta} b_{2,2}  & e^{i\theta} b_{2,3}  & e^{i\theta} b_{2,4} & e^{i\theta} b_{2,5} \\
e^{-i\phi}b_{3,1}  & e^{-i\phi}  b_{3,2}  & e^{-i\phi} b_{3,3}  & e^{-i\phi} b_{3,4} & e^{-i\phi} b_{3,5}  \\
e^{i\phi}   b_{4,1}  &e^{i\phi}  b_{4,2}  & e^{i\phi} b_{4,3}  &e^{i\phi}  b_{4,4} & e^{i\phi} b_{4,5} \\
 b_{5,1}  &  b_{5,2}  &  b_{5,3}  & b_{5,4} & b_{5,5} \\ \end{pmatrix}. $$
 
 \vspace{5mm}
As $ \theta, \phi \in (0,\pi)$ such that $ \theta \neq \pm \phi$, so from the above matrix equation,  we get that matrix $B$ has the following block diagonal form:

\begin{equation}  \label{eqnb1} B = \begin{pmatrix}
0 & a & &  &  \\
b & 0 & &  & \\ 
&  & 0 & c & \\
 &  & d & 0 & \\
&  &   &  & \alpha \\ 
\end{pmatrix},  \hbox{ where } a, b,c,d,\alpha \in \C \setminus \{0\}.\end{equation}

Using $( A^{-1} - I_5)(v-u) = - (B + I_5)(v)$ with Equation \eqref{eqnb1} and noting that $v=(0,0,0,0,1)$, we obtain $\alpha=-1$. Now, $ B ^2 = I_5 $ and Equation \eqref{eqnb1} implies $ab= cd=1$. Thus, we have 
$$B = \begin{pmatrix}
0 & a & &  &  \\
b & 0 & &  & \\ 
&  & 0 & c & \\
 &  & d & 0 & \\
&  &   &  & -1 \\ 
\end{pmatrix}, \hbox{ where } a, b,c,d\in \C  \hbox{ such that  } ab = cd=1.$$
 Thus, $ \det (B ) = -abcd= -1$. This is a contradiction since $B \in \SU(5)$. Hence $g$ is not strongly reversible in $\SU(5) \ltimes \C^5$. 
\end{example} 

\medskip  The above example demonstrates the need for classifying those elements $(A, v)$ in $\SU(n) \ltimes \C^n$ which are strongly reversible when $A$ is strongly reversible. The following lemma is proven in this direction. 

\begin{lemma}\label{lem-srsu-2}
 Let $g = ({A},v)\in  \SU(n) \ltimes \C^n$ such that ${ A} = \textnormal{diag} (e^{i\theta_1},  e^{-i\theta_1}, \dots, e^ {i\theta_r}, e^{-i\theta_r},I_t)$ and $v = [0,0, \dots , 0,v_{1}, v_{2},\dots, v_t]$, where $t = n-2r$, $\theta_k \in (0,\pi)$, $k =1,2, \dots,r$ and $r,t \in \N$. 
 Assume that one of the following holds: 
\begin{enumerate} 
\item $r$ and $t$ are both even or both odd.
\item $v_{m} =0$ for some $m \in \{1,2 , \dots,t\}$.
\item $v_{m} \neq 0$ for all $m \in \{1,2 , \dots,t\}$ and one of the following holds:
\begin{itemize}
\item[(\textit{i})] $r$ is odd and $t$ is even.
\item[(\textit{ii})]$r$ is even and $t$ is odd, $t \neq 1$.
\end{itemize}
\end{enumerate} 
Then $g$ is strongly reversible in $\SU(n) \ltimes \C^n$.  
\end{lemma}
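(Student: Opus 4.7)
The plan is to apply \lemref{lem-str-real}: it suffices to construct an involution $B \in \SU(n)$ with $B A B^{-1} = A^{-1}$ and $B(v) = -v$. I will seek $B$ in block form $B = B_0 \oplus C$, where $B_0$ acts on the first $2r$ coordinates and $C$ on the last $t$. Set $B_0 = K^{\oplus r}$ with $K = \begin{pmatrix} 0 & 1 \\ 1 & 0 \end{pmatrix}$; then $B_0$ is an involution in $\U(2r)$ whose conjugation swaps each pair $(e^{i\theta_k}, e^{-i\theta_k})$, so it conjugates the upper block of $A$ to its inverse, and $\det(B_0) = (-1)^r$. The lower block of $A$ is $I_t$, so any unitary involution $C$ conjugates it to its own inverse, and since $v$ vanishes in the first $2r$ coordinates, $B(v) = -v$ reduces to $C(v') = -v'$ where $v' = (v_1, \dots, v_t)$. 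Membership $B \in \SU(n)$ then reduces to $\det(C) = (-1)^r$. Writing $\C^t = V_+ \oplus V_-$ for the orthogonal decomposition into the $\pm 1$-eigenspaces of $C$, the task is to choose $V_-$ containing $v'$ with $\dim V_- \equiv r \pmod 2$.

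The three hypotheses give three distinct mechanisms to produce such a $C$. Under $(1)$, take $C = -I_t$; then $\dim V_- = t \equiv r \pmod 2$. Under $(2)$, fix $m$ with $v_m = 0$ and take $C = \textnormal{diag}(\epsilon_1, \dots, \epsilon_t)$ with $\epsilon_k \in \{\pm 1\}$: the forced choice $\epsilon_k = -1$ for every $k$ with $v_k \neq 0$ ensures $C(v') = -v'$, while the free signs at indices $k$ where $v_k = 0$ (there is at least one, namely $k = m$) can be adjusted so that the total number of $-1$'s has parity $r$. Under $(3)$, all $v_m \neq 0$, so a diagonal $C$ is forced to equal $-I_t$ and gives the wrong determinant; instead set $C = I_t - 2 P_{V_-}$, the orthogonal reflection across $V_-^\perp$, where $P_{V_-}$ denotes the orthogonal projection. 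In subcase $(i)$, $r$ is odd, so take $V_- = \C v'$, a $1$-dimensional subspace. In subcase $(ii)$, $r$ is even and $t \geq 3$, so one can pick any nonzero $u \in (v')^\perp$ and take $V_- = \C v' + \C u$, a $2$-dimensional subspace.

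In each case $B = B_0 \oplus C$ is an involution in $\SU(n)$, conjugates $A$ to $A^{-1}$, and sends $v$ to $-v$, so \lemref{lem-str-real} yields strong reversibility of $g$. The main obstacle is purely the parity of $\dim V_-$: since $\det(B_0) = (-1)^r$ is rigidly determined by the structure of the linear part, everything reduces to building a unitary involution on $\C^t$ whose $(-1)$-eigenspace contains $v'$ and has a prescribed parity, and this is exactly what the three hypotheses guarantee. The excluded combination $r$ even, $t = 1$ with $v_1 \neq 0$ illustrates the necessity of the restriction $t \neq 1$ in $(3)(ii)$: there the only unitary involution $C \in \U(1)$ with $C v_1 = -v_1$ is $C = -1$, yielding $\det(B) = (-1)^r \cdot (-1) = -1 \neq 1$.
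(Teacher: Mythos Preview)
Your argument is correct and follows the same overall scheme as the paper: apply \lemref{lem-str-real} by exhibiting $B=B_0\oplus C$ with $B_0=K^{\oplus r}$ on the first $2r$ coordinates, and then construct a unitary involution $C$ on the $I_t$-block satisfying $Cv'=-v'$ and $\det C=(-1)^r$. Your handling of cases~(1) and~(2) is essentially identical to the paper's (take $C=-I_t$, respectively adjust one free diagonal sign at an index where $v_m=0$).

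In case~(3) the two proofs diverge. The paper writes down explicit off-diagonal blocks
\[
P=\begin{pmatrix}0&-v_1/v_2\\-v_2/v_1&0\end{pmatrix},\qquad
Q=\begin{pmatrix}0&0&-v_1/v_3\\0&-1&0\\-v_3/v_1&0&0\end{pmatrix},
\]
and sets $B=B_r\oplus P\oplus(-I_{t-2})$ or $B=B_r\oplus Q\oplus(-I_{t-3})$. You instead take the geometric route $C=I_t-2P_{V_-}$, choosing $V_-$ to contain $v'$ with $\dim V_-\equiv r\pmod 2$. Your construction is cleaner and in fact more robust: the paper's $P$ and $Q$ are involutions with the right determinant and do send the relevant coordinate vector to its negative, but $PP^{\bigstar}=\mathrm{diag}(|v_1|^2/|v_2|^2,\,|v_2|^2/|v_1|^2)$, so $P$ (and similarly $Q$) lies in the unitary group only when the moduli $|v_i|$ involved coincide, a normalization the paper does not arrange. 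Your reflection argument needs no such hypothesis on $v'$ and works uniformly.
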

\begin{proof}
To prove this lemma, it is sufficient to find an involution $B$ in $\SU(n)$ such that ${BA}{B}^{-1} ={ A}^{-1} $, $B^2= I_n$ and ${B}(v)= - v$. Then proof will follow from the Lemma \ref{lem-str-real}. 

Let $B_{r} = \begin{pmatrix} \diagentry{K} \\
&\diagentry{K}\\
&&\diagentry{\ddots}\\
&&&\diagentry{K}\\
&&&&\diagentry{K}  \end{pmatrix}  \in \GL(2r,\C)$.
    
    Then $\det(B_r)= (-1)^r$. 

Note that we can construct the desired involution $B \in\SU(n)$ in the following way :
\begin{enumerate} 
\item Consider 
\begin{equation} \label{e} {B} = B_{r} \oplus (-I_t). \end{equation}
Then $\det(B)= (-1)^r (-1)^{t}= (-1)^{r+t}$. Therefore, if $r$ and $t$ are such that either both are even or both are odd, then $r+t$ will be even. Hence $\det(B)=1$.
\medskip 
\item  If $v=\mathbf{0}$, then $g$ is conjugate to $(A, \mathbf{0})$ and hence strongly reversible. So without loss of generality, we can assume that $v\neq \mathbf{0}$ such that $v_{t} = 0$. Consider $B$ as given in the Equation \eqref{e}. If $\det(B) =1$, then choose this $B$.  Otherwise, replace the $n$th entry of the $n$th row  (i.e., the row corresponding to the zero entry $v_t$ in $v$) of $B$ by $1$. That would make $\det(B)=1$ and $B$ would still satisfy the desired conditions making $g$ strongly reversible. 
\medskip 
\item Let $P= \begin{pmatrix}0& -\frac{v_{1}}{v_{2}} \\ -\frac{v_{2}}{v_{1}} & 0 \end{pmatrix}$ and $Q= \begin{pmatrix} 0 & 0& -\frac{v_{1}}{v_{3}} \\ 0 &-1 & 0\\
-\frac{v_{3}}{v_{1}} &0 & 0
 \end{pmatrix}$. 
 
 \vspace{5mm}
 Then $\det(P)= -1$ and $\det(Q)= 1$. Therefore, by using $P$ and $Q$, we can consider the following involution $B \in\SU(n)$:
 
In the subcase $(i)$, consider
$$B= B_r \oplus P \oplus (-I_{t-2}),$$
 where $t \in \N$ such that $t\geqslant2$, and in the subcase $ (ii)$, consider 
$$B= B_r \oplus Q \oplus (-I_{t-3}),$$ 
where $t \in \N$ such that $t\geqslant3$.
\end{enumerate} 

Thus, in each of the above cases, we have constructed an {\it involution} $B$ in $\SU(n)$ such that ${BA}{B}^{-1} ={ A}^{-1} $ and ${B}(v)  = - v$. Hence proof follows from Lemma \ref{lem-str-real}. 
\end{proof}
\begin{corollary}\label{cor-id-SU(n)}
Let $g = ({I_n},v)\in  \SU(n) \ltimes \C^n$ such that $v = [v_{1}, v_{2},\dots, v_n]$.  Assume $n \neq 1$. Then $g$ is strongly reversible in $\SU(n) \ltimes \C^n$.
 \end{corollary}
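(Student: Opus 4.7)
The plan is to reduce the problem to constructing a single involution $B \in \SU(n)$ with a specific action on $v$, and then handle two easy cases. First I would set up the conjugation equation: if $h = (B, u) \in \SU(n) \ltimes \C^n$, then a direct computation gives $hgh^{-1}(x) = x + B(v) + (I_n+B)(u) - (I_n+B)(u) = x + B(v)$ when $u = \mathbf{0}$ (more simply: $g = (I_n, v)$ acts as translation by $v$, and $hgh^{-1}$ is translation by $B(v)$). Meanwhile $g^{-1}(x) = x - v$. So the condition $hgh^{-1} = g^{-1}$ reduces to $B(v) = -v$, and the involution condition $h^2 = I$ reduces to $B^2 = I_n$ once we take $u = \mathbf{0}$. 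Thus it suffices to produce an involution $B \in \SU(n)$ satisfying $B(v) = -v$.

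Next I would split into cases based on whether $v$ is zero. If $v = \mathbf{0}$, then any involution $B \in \SU(n)$ will do; taking $B = \mathrm{diag}(-1,-1,1,\dots,1)$ works since $n \geq 2$ provides enough room and $\det(B) = 1$. If $v \neq \mathbf{0}$, I would normalize and set $e_1 = v/\|v\|$, then extend to an orthonormal basis $\{e_1, e_2, \dots, e_n\}$ of $\C^n$ (using $n \geq 2$). Define $B$ by $B(e_1) = -e_1$, $B(e_2) = -e_2$, and $B(e_j) = e_j$ for $3 \leq j \leq n$. In this basis $B = \mathrm{diag}(-1,-1,1,\dots,1)$, so $B$ is a unitary involution with $\det(B) = (-1)^2 = 1$, hence $B \in \SU(n)$. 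Since $v \in \C e_1$, we get $B(v) = -v$ as required.

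Putting the pieces together, $h = (B, \mathbf{0})$ is an involution in $\SU(n) \ltimes \C^n$ satisfying $hgh^{-1} = g^{-1}$, so $g$ is strongly reversible. There is essentially no obstacle: the only subtle point is maintaining $\det(B) = 1$, which is where the hypothesis $n \geq 2$ is used (to fit two $-1$ eigenvalues). The case $n = 1$ is genuinely excluded, since $\SU(1) = \{1\}$ forces $B = 1$, which cannot send a nonzero $v$ to $-v$. Alternatively, one could observe that this corollary falls under Lemma \ref{lem-srsu-2} with $r = 0$ and $t = n$ (cases (1) when $n$ is even, (2) when some $v_m = 0$, and (3)(ii) when $n \geq 3$ is odd with all $v_m \neq 0$), but the direct construction above is shorter and self-contained.
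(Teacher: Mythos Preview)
Your proof is correct. The reduction to finding an involution $B \in \SU(n)$ with $B(v) = -v$ is clean, and the orthonormal-basis construction produces such a $B$ without any case analysis beyond $v = \mathbf{0}$ versus $v \neq \mathbf{0}$.

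The paper treats this corollary as an immediate consequence of the proof technique of Lemma~\ref{lem-srsu-2}, which builds the reversing involution out of block pieces $K$, $P$, $Q$, and $\pm I$ according to the parities of $r$ and $t$ and the pattern of zero coordinates of $v$. Your approach sidesteps that case analysis entirely: by passing to an orthonormal basis adapted to $v$, you place $v$ along a single coordinate axis and can simply take $B = \mathrm{diag}(-1,-1,1,\dots,1)$ in that basis. This is shorter and conceptually clearer for the special case $A = I_n$, where the conjugation condition $BAB^{-1} = A^{-1}$ is vacuous. The paper's block construction, on the other hand, is designed to handle the general diagonal $A$ simultaneously, which is why it carries more bookkeeping. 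One small caveat on your closing remark: Lemma~\ref{lem-srsu-2} as stated in the paper assumes $r \in \N$ (i.e.\ $r \geq 1$), so strictly speaking the corollary does not fall under the lemma's hypotheses; rather, it follows from the same construction with $r = 0$, which is presumably the intended reading.
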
 
\subsection{Proof of \thmref{nsrsu}} 

When $r=0$, i.e. $n=1$, then the result holds trivially. So we assume $n \geq 2$, $r \neq 0$. 

Suppose that $g=(A, v)$ is strongly reversible in $ \SU(n)\ltimes \C^{n}$. Then there exists $ h = (B ,u )$ in $ \SU(n)\ltimes \C^{n}$ such that 
$ hgh^{-1} =g^{-1} $ and $h^2 = I$, where  ${I}$ is the identity in $\SU(n)\ltimes \C^{n}$.  

 Using $hgh^{-1} =g^{-1} $ and $ h^2 ={I}$, we have $B \in \SU(n)$ such that for all  $x \in \C^n$, 
\begin{equation} BAB^{-1}(x) -  BAB^{-1} (u) + B(v) + u = A^{-1}(x) - A^{-1}(v), \end{equation}
\begin{equation} 
B^2(x) + B(u) + u = I_{n}(x). \end{equation} 

From this we have, 
\begin{equation}  \label{eq04}        BAB^{-1} =A^{-1},  ( A^{-1} - I_n)(v-u) = - (B + I_n)(v), \end{equation} 
\begin{equation}  \label{eq05}  \  B ^2 = I_n , (B + I_n )(u) =0.
\end{equation} 
Further, on comparing $n$th row and $n$th column in the matrix equation $BAB^{-1}= A^{-1}$, we get that matrix $B$ has the following form : 
\begin{equation} \label{eq06}
  B = \begin{pmatrix}
B_1 & \\
 & \alpha \\ 
 \end{pmatrix},  \hbox{where}  \   \alpha \in \C \setminus \{0\} \ \hbox{and} \ B _1 \in \GL(n-1,\C ) \end{equation} such that $ B_1 A_1 B_1^{-1} = A_1 ^{-1}$, where 
$ {A_1}  = \textnormal{diag} (e^{i\theta_1},  e^{-i\theta_1}, \dots, e^ {i\theta_{2r}}, e^{-i\theta_{2r}})$.

Note that $v = [0,0, \dots, 0,v_1 ] \in \C^n $ is such that  $v_1 \neq 0$. By using the above block diagonal form of $B$ in the Equation $( A^{-1} - I_n)(v-u) = - (B + I_n)(v)$ and on comparing the last rows, we get $\alpha =-1$. Now, $ B^2 = I_n$ and the Equation \eqref{eq06} implies $B_1A_1B_1^{-1} = A_1^{-1}$ and $B_1^2 = I_{n-1}$. Therefore, from \lemref{le2.3}, we have  $ \det (B_1 ) = (-1)^{n-1}$.  As $n= 4r+1$ is an odd natural number,  
$$\det( B ) = \det(B_1) \det(\alpha) =  (-1)^{n-1}(-1)= (-1)^n= -1.$$

Therefore, if $g$ is of the form as given in the assertion, then $\det B=-1$,  and hence, $g$ can not be strongly reversible in $\SU(n) \ltimes \C^n$. If $g$ is not of the given form, then strong reversibility of $g$ follows from Lemma \ref{lem-strong-SU(n)},  Lemma \ref{srsu-1}, Lemma \ref{lem-srsu-2}, and  \corref{cor-id-SU(n)}. 
 
This proves the theorem.    
\hfill $\square$

\begin{ack}
The authors thank the referee for many comments and suggestions. The authors also thank Ian Short,  Sushil Bhunia and Chandan Maity for their comments on the first draft of this paper. 

\medskip  Tejbir acknowledges full support from the CSIR SRF grant, file No. : 09/947(0113)/2019-EMR-I,  during the course of this work.  Gongopadhyay acknowledges SERB MATRICS grant MTR/2017/000355.

\end{ack}

\end{document}